\numberwithin{equation}{section}
\newtheorem{theorem}{Theorem}[section]
\newtheorem{corollary}[theorem]{Corollary}
\theoremstyle{definition}
\def\xyma{\xymatrix@M.7em}
\newcommand{\ilimit}{\mbox{$\,\displaystyle{\lim_{\longleftarrow}}\,$}}
\title[Free Group Rings and Derived Functors]{Free Group Rings and Derived Functors}
\author[Roman Mikhailov and Inder Bir S. Passi]{Roman Mikhailov\thanks{The first author acknowledges Saint-Petersburg State University
for a research grant N 6.37.208.2016. This paper was completed
during the visit of the first author to Indian Institute of
Science Education and Reseach Mohali; he wishes to express his
gratitude to the Institute for its  warm hospitality.}\ and Inder
Bir S. Passi}
\begin{document}

\begin{abstract}
An approach to identify the  normal subgroups determined by ideals
in  free group rings with the help of the derived functors of
non-additive functors is explored. A similar approach, i.e., via
derived functors,  for  computing  limits of functors from the
category of free presentations to the category of abelian groups,
arising from commutator structure of free groups, is also
discussed.
\end{abstract}

\begin{classification}
18A30, 18E25, 20C05, 20C07
\end{classification}

\begin{keywords}
Group rings, derived functors, generalized dimension subgroups.
\end{keywords}

\maketitle

\section{Introduction}
 Let $F$ be a free group and $\mathbb Z[F]$ its integral group ring. In the theory of group rings a  repeatedly occurring theme is
 the identification of normal subgroups $D(F,\,\mathfrak a):=F\cap (1+\mathfrak a)$ determined by two-sided ideals
 $\mathfrak a$ in $\mathbb Z[F]$ (see, e.g., \cite{Gupta:87}, \cite{MP:2009}, \cite{Vermani:1999}). It is often the case that a certain
 normal subgroup $N$, say, is easily seen to be contained in $D(F,\,\mathfrak a)$, and computing the quotient $D(F,\,\mathfrak a)/N$ usually
  becomes a challenging problem.  A classical instance of this phenomenon is the computation of the so-called dimension quotients
  $D(F,\, {\bf r}+{\bf f}^n)/R\gamma_n(F)$, $n\geq 1$, for a group $G$ with free presentation $G\cong F/R$, where $\bf r $ denotes the two-sided
  ideal of the group ring $\mathbb Z[F]$ generated by $R-1$, and $\gamma_n(F)$ is the $n$th term of the lower central series of $F$.
  More generally, if $R_1,\,R_2,\,\ldots,\, R_n$ are normal subgroups of $F$ and $\mathfrak a$ is a sum of certain products ${\bf r}_{i_1}\ldots {\bf r}_{i_s}$,
  then the identification of $D(F,\,\mathfrak a)$ is usually an intractable problem. However, it has recently been noticed that derived functors of non-additive
  functors in the sense of Dold-Puppe \cite{DP:1961} can be a useful tool for investigations in this area; for, the quotient $D(F,\,\mathfrak a)/N$ can
  sometime turn out to have interesting homological or homotopical interpretation.  To mention an instance, let $R$ be a normal subgroup
   of a free group $F$. It is well-known (see \cite{Gupta:87}) that $D(F,\,{\bf f}^n)=\gamma_n(F)$,
   for all $n\geq 1$, and $D(F, \, {\bf fr})=\gamma_2(R)$. Surprisingly, it turns out that
 $D(F, \, {\bf f}^3+{\bf fr})$ is related to the first derived functor of the second symmetric power functor: $L_1\operatorname{SP}^2(F/(\gamma_2(F)R))$
 \cite{HMP:2009}.  The purpose of the present study is to continue  further our work in \cite{MP:2015a} on the
 relationship between free group rings and derived functors of non-additive functors.
In another direction, we extend our results in \cite{MP:2015a} on
the connection between derived functors in the sense of
 Dold-Puppe and limits of functors from the category of free presentations of groups to the category of abelian groups.

 \par\vspace{.25cm}

Given a polynomial endofunctor $\mathcal F$ of degree $n$ (see
\cite{hpin}) on the category of abelian groups, say, for example,
the $n$th symmetric power ${\sf SP}^n$, the $n$th Lie power
$\mathfrak L^n$, the $n$th super-Lie power $\mathfrak L_s^n$, or
certain Schur functor, it has (in general, non-zero) derived
functors $L_i\mathcal F,\ i=0,\,1,\,2,\,\dots$, in the sense of
Dold-Puppe \cite{DP:1961}. It turns out that the zeroth and the
$(n-1)$st derived functors $L_0\mathcal F,\ L_{n-1}\mathcal F$
usually are the simplest ones to compute, and, in general they
have a simple combinatorial description. It is naturally to be
expected that they appear in our analysis. In general,
intermediate derived functors have a complicated structure, their
appearance in our study is indeed rather unexpected.
 \par\vspace{.25cm}
We begin by recalling, in Section 2, a needed basic fact about
free group rings. In Section 3 we develop the results on derived
functors of certain functors which help establish a connection
between derived functors and subgroups determined by ideals in
free group rings. The main results of this paper are in Section 4,
where we exhibit several quotients of subgroups determined by
ideals in free group rings in terms of derived functors. To
mention here just one of our  results, Theorem \ref{main} states
that if $1\to R\to F\to G\to 1$ is a free presentation of a group
$G$, and  $S$ is the commutator subgroup $[R,\,F]$, then there are
natural isomorphisms
\begin{align*}
& \frac{F\cap (1+{\bf rfr}+{\bf s}{\bf
r})}{\gamma_2(S)\gamma_3(R)}\cong L_1{\sf SP}^2(H_2(G)),\\
& \frac{F\cap (1+{\bf rfr}+{\bf
r}^2{\bf f})}{\gamma_2(S)\gamma_3(R)}\cong L_1{\sf SP}^2(H_2(G)),\\
& \frac{F\cap (1+{\bf s}^2{\bf r}+{\bf r}^2{\bf
fr})}{\gamma_3(S)\gamma_4(R)}\cong L_2{\mathfrak L}_s^3(H_2(G)),
\end{align*} where $H_2(G)$ is the second integral homology group of the group $G$. Finally, in Section 5, we give a number of identifications of the limits of functors, on the category of free presentations of groups, as derived functors. Again, to mention just one  result, we prove in Theorem \ref{main1} that if a group $G\cong F/R$ is 2-torsion-free, then
$$
\ilimit \frac{R''}{\gamma_2([\gamma_2(R),\,F])\gamma_3(R')}\cong
L_1{\sf SP}^2(H_4(G,\mathbb Z/2)),
$$where $R',\ R''$ are respectively the first and the second derived subgroups of $R$.
\par\vspace{.25cm}
For background on derived functors of non-additive functors, we
refer the reader to \cite{BM:2011} and  \cite{DP:1961}, and, for
free groups rings, to \cite{Gupta:87}.

\section{Preliminaries}\par\vspace{.5cm}
 For a normal subgroup $H$ of a group $G$, {\bf h}  denotes the two-sided ideal $(H-1)\mathbb Z[G]$  of the integral group ring
 $\mathbb Z[G]$. We denote by  $\gamma_n(G),\ n\geq 1,$  the $n$th term in the lower central series of $G$ defined inductively by setting
 \begin{multline*}
 G=\gamma_1(G), \quad \gamma_{n+1}(G)=[G,\,\gamma_n(G)]=\\ \langle [x,\,y]:=x^{-1}y^{-1}xy\,|\,x\in G,\ y\in \gamma_n(G)\rangle,\ n\geq 1.
 \end{multline*} For notational convenience, we also denote the derived subgroup $\gamma _2(G) $ by $G'$.
 \par\vspace{.25cm}
Let $F$ be a free group, $\mathfrak b\subset \mathfrak a$ and
$\mathfrak d\subset \mathfrak c$  ideals of ${\mathbb Z}[F]$ such
that $${\sf Tor}({\mathbb Z}[F]/\mathfrak a,\,{\mathbb
Z}[F]/\mathfrak c)=0,$$ where ${\sf Tor}={\sf Tor}^{\mathbb Z}_1.$
Then the map $(x,\,y)\mapsto xy$, $x\in \mathfrak a,\ y\in
\mathfrak c$,  induces an isomorphism (\cite{IM:2015}, Lemma 4.9.)
\begin{equation}\label{sergey}(\mathfrak a/\mathfrak b) \otimes_{\mathbb Z[F]} (\mathfrak
c/\mathfrak d)\cong \frac{\mathfrak a\mathfrak c}{\mathfrak
b\mathfrak c+\mathfrak a\mathfrak d}.
\end{equation}

 \section{Derived functors}

For a functor $T:\mathcal C\to \mathcal A$ from an abelian
catefory $\mathcal C$ to the category $\mathcal A$ of abelian
groups, $L_pT$ denotes the $p$th derived functor of $T$ at level
0, i.e., the functor $L_pT(-,\,0)$ in the notation of Dold-Puppe
\cite{DP:1961}. Recall that the functor $T: \mathcal C\to \mathcal
A$ is said to be a polynomial functor of degree $\leq n$ if the
$(n+1)$st cross-effect $T^{[n+1]}$ is zero \cite{hpin}.
\subsection{Quadratic functors}
 \par\vspace{.25cm}
Let $Q$ be a free abelian group, and $U$ a subgroup of $Q$. Then
we have the following commutative diagram with exact rows and
columns (see \cite{BM:2011},  \cite{Kock:2001}):
\begin{equation}\label{firstdia}
\xyma{& \Lambda^2(U)\ar@{>->}[r]\ar@{>->}[d] & U\otimes
Q\ar@{->}[r]\ar@{>->}[d] & {\sf
SP}^2(Q)\ar@{=}[d]\\
& \Lambda^2(Q)\ar@{>->}[r]\ar@{->>}[d] & Q\otimes Q\ar@{->}[r]\ar@{->>}[d] & {\sf SP}^2(Q)\\
L_1{\sf SP}^2(Q/U)\ar@{>->}[r] &
\Lambda^2(Q)/\Lambda^2(U)\ar@{->}[r] & Q/U\otimes Q\ar@{->>}[r] &
{\sf SP}^2(Q/U) }
\end{equation}where $\Lambda^2$ and ${\sf SP}^2$ are the exterior square and the symmetric square endofunctors respectively on the category $\mathcal A$, and the homomorphisms are the natural maps induced by the inclusion $U\subset Q$ or the projection $Q\to Q/U$.
Of particular interest to us is the lower 4-term exact sequence;
as such  we display it  separately for later
reference:\par\vspace{.25cm}
\begin{equation}\label{firstdialine}
0\to L_1{\sf SP}^2(Q/U)\to \Lambda^2(Q)/\Lambda^2(U)\to Q/U\otimes
Q\to {\sf SP}^2(Q/U)\to 0
\end{equation}
\par\vspace{.5cm}The following result, which is a generalization of the corresponding result in \cite{Kock:2001}, plays a crucial role in establishing a connection between subgroups determined by ideals in free group rings and derived functors. \par\vspace{.25cm}
\begin{theorem}\label{spespei}
Let $E$ be an abelian group, and $I$ a subgroup of $E$. The first
homology of the Koszul-type complex
$$
\Lambda^2(I)\to I\otimes E\to {\sf SP}^2(E),
$$
where the two homomorphisms are the natural maps induced by the
inclusion $I\subseteq E$, is naturally isomorphic to
$$
{\sf Coker}\{L_1{\sf SP}^2(E)\to L_1{\sf SP}^2(E/I)\}.
$$
\end{theorem}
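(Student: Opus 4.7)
The plan is to reduce to the known case (\ref{firstdialine}) by choosing a free presentation of $E$ that simultaneously presents the subgroup $I$. Fix a surjection $\pi:Q\twoheadrightarrow E$ with $Q$ free abelian and kernel $K$, and set $U:=\pi^{-1}(I)$; then $0\to K\to U\to I\to 0$ is exact, $Q/U\cong E/I$, and both $K$ and $U$ are free abelian since they are subgroups of $Q$. Applying (\ref{firstdialine}) to the pairs $(Q,K)$ and $(Q,U)$ produces two natural 4-term exact sequences fitting into a commutative diagram via the inclusion $K\subseteq U$.

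One splits each 4-term sequence as a short exact sequence
$$0\to L_1{\sf SP}^2(E)\to \Lambda^2(Q)/\Lambda^2(K)\to C_1\to 0,\qquad 0\to L_1{\sf SP}^2(E/I)\to \Lambda^2(Q)/\Lambda^2(U)\to C_2\to 0,$$
with $C_1=\ker(E\otimes Q\to{\sf SP}^2(E))$ and $C_2=\ker(E/I\otimes Q\to{\sf SP}^2(E/I))$. The middle vertical map $\Lambda^2(Q)/\Lambda^2(K)\twoheadrightarrow\Lambda^2(Q)/\Lambda^2(U)$ is surjective with kernel $\Lambda^2(U)/\Lambda^2(K)$, so the snake lemma yields
$$\operatorname{Coker}\bigl(L_1{\sf SP}^2(E)\to L_1{\sf SP}^2(E/I)\bigr)\;\cong\;\frac{\ker(C_1\to C_2)}{\operatorname{im}\bigl(\Lambda^2(U)/\Lambda^2(K)\to\ker(C_1\to C_2)\bigr)}.$$
A second application of the snake lemma, to the pair of short exact sequences $0\to C_i\to E_i\otimes Q\to{\sf SP}^2(E_i)\to 0$ with $E_1=E$, $E_2=E/I$, combined with flatness of $Q$ and the right-exact sequence $I\otimes E\to{\sf SP}^2(E)\to{\sf SP}^2(E/I)\to 0$, identifies $\ker(C_1\to C_2)$ with $\ker(I\otimes Q\to{\sf SP}^2(E))$.

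The final step descends from $Q$ to $E$. The composite $\Lambda^2(U)\to Q\otimes Q\to I\otimes Q$ sends $u\wedge v\mapsto \bar u\otimes v-\bar v\otimes u$; specializing $u\in K$, so $\bar u=0$, shows its image already contains every element of $I\otimes K$, hence contains $\ker(I\otimes Q\to I\otimes E)=\operatorname{im}(I\otimes K\to I\otimes Q)$. Passing to the quotient $I\otimes Q\twoheadrightarrow I\otimes E$ therefore replaces the numerator $\ker(I\otimes Q\to{\sf SP}^2(E))$ by $\ker(I\otimes E\to{\sf SP}^2(E))$ and the denominator by $\operatorname{im}(\Lambda^2(I)\to I\otimes E)$, which is precisely the first homology of the Koszul-type complex in question.

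The only non-formal point is the inclusion $\ker(I\otimes Q\to I\otimes E)\subseteq\operatorname{im}(\Lambda^2(U)/\Lambda^2(K)\to I\otimes Q)$, where the antisymmetrization must be shown to supply enough elements; naturality in $(E,I)$ and independence of the chosen presentation $Q\twoheadrightarrow E$ are then routine, since every ingredient of the argument is functorial in the pair $(Q,U)$.
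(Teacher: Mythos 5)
Your proof is correct, and it shares its starting point with the paper: both arguments choose a free abelian $Q$ mapping onto $E$ together with a chain of subgroups $K\subseteq U\subseteq Q$ (the paper writes $U\subset V$) so that $Q/K\cong E$, $Q/U\cong E/I$, and both compare the Koszul-type representatives of $L{\sf SP}^2(E)$ and $L{\sf SP}^2(E/I)$ supplied by (\ref{firstdialine}). Where you genuinely diverge is in identifying the resulting cokernel term. The paper extracts from the comparison the exact sequence $L_1{\sf SP}^2(E)\to L_1{\sf SP}^2(E/I)\to C\to {\sf SP}^2(E)\to {\sf SP}^2(E/I)$ and then identifies $C$ with $I\otimes E/\operatorname{Im}(\Lambda^2(I)\to I\otimes E)$ by a free group ring computation: it writes $Q=F/F'$ for a free group $F$ with normal subgroups $R\subset S$ and uses the ideal-quotient isomorphism (\ref{sergey}) to express $I\otimes Q$, $I\otimes E$ and $C$ as quotients of ideals, the key point being that ${\bf sr}$ is absorbed into $(S'-1)+{\bf rf}+{\bf f}^3$. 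You instead stay entirely inside abelian groups: two snake-lemma applications reduce the cokernel to $\ker(I\otimes Q\to{\sf SP}^2(E))$ modulo the antisymmetrized image of $\Lambda^2(U)$, and the descent from $Q$ to $E$ rests on your explicit check that this image contains $\ker(I\otimes Q\to I\otimes E)$ (specialize $u\in K$), which is indeed the only non-formal point and you do supply it. Your route is more elementary and self-contained (no group rings, no appeal to (\ref{sergey})) at the cost of more diagram chasing; the paper's route is shorter once (\ref{sergey}) is in hand and has the side benefit of exhibiting $C$ in exactly the ideal-theoretic form that the identification theorems of Section 4 then exploit. Both proofs rest on the same tacit input, namely that the chain map induced by $K\subseteq U$ (resp.\ $U\subset V$) computes the natural map $L_1{\sf SP}^2(E)\to L_1{\sf SP}^2(E/I)$, so your deferral of naturality to functoriality in $(Q,U)$ is on the same footing as the paper's.
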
\par\vspace{.25cm}
\begin{proof}
Let  $Q$ be a free abelian group with  subgroups $U\subset V$ such
that $$ Q/U=E,\quad V/U=I.
$$
Consider the following diagram
$$
\xyma{& \Lambda^2(U)\ar@{>->}[r]\ar@{>->}[d] & U\otimes
Q\ar@{->}[r]\ar@{>->}[d] & {\sf
SP}^2(Q)\ar@{=}[d]\\
& \Lambda^2(V)\ar@{>->}[r]\ar@{->>}[d] & V\otimes Q\ar@{->}[r]\ar@{->>}[d] & {\sf SP}^2(Q)\\
K\ar@{>->}[r] & \Lambda^2(V)/\Lambda^2(U)\ar@{->}[r] & V/U\otimes
Q\ar@{->>}[r] & C}
$$
resulting from the natural map between representatives  \\
$L{\sf SP}^2(E): \xyma{& \Lambda^2(U)\ar@{>->}[r] & U\otimes
Q\ar@{->}[r] & {\sf SP}^2(Q)}
$\\
and \\
 $L{\sf SP}^2(E/I):
\xyma{
& \Lambda^2(V)\ar@{>->}[r] & V\otimes Q\ar@{->}[r]& {\sf SP}^2(Q)\\
}
$\\
of  $L_1{\sf SP}^2(E)$ and $L_1{\sf SP}^2(E/I)$ respectively in
the derived category of $\mathcal A$, where
$$
K={\sf Ker}\{L_1{\sf SP}^2(E)\to L_1{\sf SP}^2(E/I)\}
$$
and $C$ lives in the exact sequence
\begin{equation}\label{cexact}
L_1{\sf SP}^2(E)\to L_1{\sf SP}^2(E/I)\to C\to {\sf SP}^2(E) \to
{\sf SP}^2(E/I),
\end{equation}as can be seen by easy diagram chasing.
We assert that
\begin{equation}\label{ciso}
C=I\otimes E/{\sf Im}(\Lambda^2(I)\to I\otimes E). \end{equation}
To see this, let us present $Q$ as $F/F'$ with $F$  a free group
and $F'$ its derived subgroup, and let $R\subset S$ be normal
subgroups of $F$ such that
$$U=R/F', \quad V=S/F'.$$ With our notation for ideals in group rings induced by normal subgroups, we have, in view of (\ref{sergey}),  natural isomorphisms
$$
I\otimes Q\cong\frac{{\bf s}+{\bf f}^2}{{\bf r}+{\bf f}^2}\otimes
\frac{\bf f}{{\bf f}^2}\cong \frac{{\bf sf}+{\bf f}^3}{{\bf
rf}+{\bf f}^3}.
$$
Therefore, there is a natural isomorphism
$$
C\cong\frac{{\bf sf}+{\bf f}^3}{(S'-1)+{\bf rf}+{\bf f}^3}
$$
On the other hand,
$$
I\otimes E\cong \frac{{\bf s}+{\bf f}^2}{{\bf r}+{\bf f}^2}\otimes
\frac{\bf f}{{\bf r}+{\bf f}^2}\cong \frac{{\bf sf}+{\bf
f}^3}{{\bf rf}+{\bf sr}+{\bf f}^3}
$$
Now observe that
$$
I\otimes E/{\sf Im}(\Lambda^2(I)\to I\otimes E)\cong \frac{{\bf
sf}+{\bf f}^3}{(S'-1)+{\bf rf}+{\bf sr}+{\bf f}^3}=\frac{{\bf
sf}+{\bf f}^3}{(S'-1)+{\bf rf}+{\bf\, f}^3}\cong C,
$$
and thus the isomorphism (\ref{ciso}) is proved. Consequently the
assertion in the Theorem follows from the exact sequence
(\ref{cexact}).
\end{proof}

\par\vspace{.5cm}Given a subgroup $I$ of an abelian group $E$, let $\underline{\Lambda^2(I)}$ denote the image of the map $\Lambda^2(I)\to \Lambda^2(E)$
induced by the natural inclusion map $I\hookrightarrow E$.

\par\vspace{.5cm}

\begin{theorem}\label{lambdalemma} If  $E$ is  an abelian group, $I$ a subgroup of $E$ and  $${\sf Tor}(E/I,\,E)\to L_1{\sf SP}^2(E/I)$$
the composition of the two natural maps $${\sf Tor}(E/I,\,E)\to
{\sf Tor}(E/I,\,E/I)\to L_1{\sf SP}^2(E/I),$$ then there is a
natural isomorphism
$$
{\sf Coker}\{{\sf Tor}(E/I,\,E)\to L_1{\sf SP}^2(E/I)\}\cong {\sf
Ker}\{\Lambda^2(E)/\underline{\Lambda^2(I)}\to E/I\otimes E\}.
$$

\end{theorem}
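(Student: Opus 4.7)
My plan is to mimic the proof of Theorem~\ref{spespei}: realize both sides of the asserted isomorphism through a single free resolution of $E$, then extract the identification from a snake-lemma argument. Choose a free abelian group $Q$ surjecting onto $E$, set $U:={\sf Ker}(Q\to E)$, and let $V$ be the preimage of $I$, so that $V/U=I$ and $Q/V=E/I$ (all three of $U,V,Q$ are free abelian). Applying (\ref{firstdialine}) to the pair $V\subset Q$ identifies $L_1{\sf SP}^2(E/I)={\sf Ker}(\alpha)$ where $\alpha\colon\Lambda^2(Q)/\Lambda^2(V)\to E/I\otimes Q$ sends $[q\wedge q']\mapsto\bar q\otimes q'-\bar q'\otimes q$, while the resolution $0\to U\to Q\to E\to 0$ identifies ${\sf Tor}(E/I,E)={\sf Ker}(\phi)$ for $\phi\colon E/I\otimes U\to E/I\otimes Q$ induced by $U\subseteq Q$.

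The surjection $Q\to E$ yields a commutative square
$$
\xyma{
\Lambda^2(Q)/\Lambda^2(V)\ar[r]^{\alpha}\ar@{->>}[d]_{\pi_A} & E/I\otimes Q\ar@{->>}[d]^{\pi_B}\\
\Lambda^2(E)/\underline{\Lambda^2(I)}\ar[r]^{\alpha'} & E/I\otimes E
}
$$
with surjective verticals. Using the standard identification $\Lambda^2(E)\cong\Lambda^2(Q)/(Q\wedge U)$, one checks that ${\sf Ker}(\pi_A)$ equals the image of the well-defined map
$$
\psi\colon E/I\otimes U\to\Lambda^2(Q)/\Lambda^2(V),\qquad \bar q\otimes u\mapsto[q\wedge u],
$$
where well-definedness uses $U\subseteq V$ (so that $v\wedge u\in\Lambda^2(V)$ for $v\in V$), while ${\sf Ker}(\pi_B)=\phi(E/I\otimes U)$. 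Because $u\in U$ has zero image in $E/I=Q/V$, a direct computation gives $\alpha\circ\psi=\phi$; hence the restriction $\beta:=\alpha|_{{\sf Ker}(\pi_A)}\colon{\sf Ker}(\pi_A)\to{\sf Ker}(\pi_B)$ is surjective and ${\sf Coker}(\beta)=0$.

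The snake lemma applied to this square now produces the short exact sequence
$$
0\to{\sf Ker}(\beta)\to L_1{\sf SP}^2(E/I)\to{\sf Ker}(\alpha')\to 0,
$$
so it remains to identify ${\sf Ker}(\beta)$ with the image of ${\sf Tor}(E/I,E)$. By construction, ${\sf Ker}(\beta)=\psi(E/I\otimes U)\cap{\sf Ker}(\alpha)$; using $\alpha\circ\psi=\phi$, the element $\psi(x)$ lies in ${\sf Ker}(\alpha)$ exactly when $\phi(x)=0$, i.e., when $x\in{\sf Tor}(E/I,E)$. Hence ${\sf Ker}(\beta)=\psi({\sf Tor}(E/I,E))$, and unwinding the natural maps through the auxiliary resolution $0\to V\to Q\to E/I\to 0$ of $E/I$ shows that $\psi|_{{\sf Tor}(E/I,E)}$ coincides with the composite ${\sf Tor}(E/I,E)\to{\sf Tor}(E/I,E/I)\to L_1{\sf SP}^2(E/I)$ from the statement (naturality along $U\hookrightarrow V$).

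The main obstacle I anticipate is the careful identification of ${\sf Ker}(\pi_A)$: one must invoke $\Lambda^2(E)=\Lambda^2(Q)/(Q\wedge U)$ (the standard right exactness of $\Lambda^2$ on surjections of abelian groups) and verify that the image of $Q\wedge U$ inside $\Lambda^2(Q)/\Lambda^2(V)$ is exactly $\psi(E/I\otimes U)$, which again uses $U\subseteq V$. A secondary bookkeeping point is comparing $\psi|_{{\sf Tor}}$ with the two-step natural composite in the statement; everything else is routine snake-lemma chasing in the spirit of Theorem~\ref{spespei}.
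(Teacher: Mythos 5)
Your proposal is correct, but it takes a genuinely different route from the paper's. The paper stays at the level of the pair $I\subseteq E$: it forms a three-row diagram whose top row is the Koszul-type complex $\Lambda^2(I)\to I\otimes E\to{\sf SP}^2(E)$ augmented by ${\sf Tor}(E/I,\,E)$, whose middle row $\Lambda^2(E)\to E\otimes E\to{\sf SP}^2(E)$ is exact, and whose bottom row is $\Lambda^2(E)/\underline{\Lambda^2(I)}\to E/I\otimes E$; the homology exact sequence together with Theorem \ref{spespei} yields an exact sequence
$$
{\sf Tor}(E/I,\,E)\to {\sf Coker}\{L_1{\sf SP}^2(E)\to L_1{\sf SP}^2(E/I)\}\twoheadrightarrow {\sf Ker}\{\Lambda^2(E)/\underline{\Lambda^2(I)}\to E/I\otimes E\},
$$
and the theorem follows from the containment ${\sf Im}\{L_1{\sf SP}^2(E)\to L_1{\sf SP}^2(E/I)\}\subseteq{\sf Im}\{{\sf Tor}(E/I,\,E)\to L_1{\sf SP}^2(E/I)\}$. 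You bypass Theorem \ref{spespei} and $L_1{\sf SP}^2(E)$ altogether, working with a chosen resolution $U\subset V\subset Q$ and the snake lemma; your identifications (${\sf Ker}(\pi_A)={\sf Im}(\psi)$ via right exactness of $\Lambda^2$, $\alpha\circ\psi=\phi$, hence ${\sf Ker}(\beta)=\psi({\sf Tor}(E/I,\,E))$ and the short exact sequence $0\to{\sf Ker}(\beta)\to L_1{\sf SP}^2(E/I)\to{\sf Ker}(\alpha')\to 0$) are all correct. What the paper's route buys is a choice-free diagram in $E$ and $I$ (so naturality is immediate) and reuse of Theorem \ref{spespei}; what yours buys is a more self-contained, elementary argument that exhibits the statement as a genuine short exact sequence. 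The one step you should spell out --- and it is the exact counterpart of the containment of images that the paper itself asserts without proof --- is that the abstract composite ${\sf Tor}(E/I,\,E)\to{\sf Tor}(E/I,\,E/I)\to L_1{\sf SP}^2(E/I)$ really is computed, on the resolution $0\to V\to Q\to E/I\to 0$, by $\bar q\otimes v\mapsto [q\wedge v]$ on ${\sf Ker}\{E/I\otimes V\to E/I\otimes Q\}$ (this can be checked, e.g., from the cross-effect description of the map via $L_1{\sf SP}^2(A\oplus A)$ and the fold map); together with a remark that the construction is independent of the chosen resolution, this closes the argument and gives naturality in $(E,I)$.
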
\par\vspace{.25cm}
\begin{proof}
Consider the following commutative diagram with exact columns:
$$
\xyma{& {\sf Tor}(E/I,\,E)\ar@{->}[d]\\ \Lambda^2(I) \ar@{->}[r]
\ar@{->}[d]& I\otimes E\ar@{->}[r]\ar@{->}[d] & {\sf
SP}^2(E)\ar@{=}[d]\\
\Lambda^2(E)\ar@{>->}[r]\ar@{->>}[d] & E\otimes E\ar@{->>}[r]\ar@{->>}[d] & {\sf SP}^2(E)\\
\Lambda^2(E)/\underline{\Lambda^2(I)} \ar@{->}[r] & E/I\otimes E}
$$
Note that the middle horizontal sequence is exact. The homology
exact sequence together with Theorem \ref{spespei} implies that
there is a natural exact sequence
$$
{\sf Tor}(E/I,\,E)\to {\sf Coker}\{L_1{\sf SP}^2(E)\to L_1{\sf
SP}^2(E/I)\}\twoheadrightarrow{\sf
Ker}\{\Lambda^2(E)/\underline{\Lambda^2(I)}\to E/I\otimes E\}.
$$
However,
$$
{\sf Im}\{L_1{\sf SP}^2(E)\to L_1{\sf SP}^2(E/I)\}\subseteq {\sf
Im}\{{\sf Tor}(E/I,\,E)\to L_1{\sf SP}^2(E/I)\}
$$
and we thus obtain the asserted isomorphism.
\end{proof}
\par\vspace{.25cm}
\subsection{Cubic functors}
For an abelian group $A$, recall that  $\mathfrak L_s^3(A)$,  the
{\it third super Lie functor}  evaluated at $A $ \cite{BM:2011},
is by definition the abelian  group generated by brackets
$\{a,\,b,\,c\}$, $a,\,b,\,c\in A$, which are additive in each
variable, and satisfy the following defining relations:
\begin{align*}
& \{a,\,b,\,c\}=\{b,\,a,\,c\},\\
& \{a,\,b,\,c\}+\{c,\,a,\,b\}+\{b,\,c,\,a\}=0.
\end{align*}

Let $Q$ be a free abelian group, and $U$ a subgroup of $Q$. We
note that the following diagram
$$
\xyma{\EuScript L^3(U)\ar@{>->}[d] \ar@{>->}[r] & U\otimes
U\otimes Q\ar@{->}[r] \ar@{>->}[d] & U\otimes Q\otimes
Q\ar@{->}[r]\ar@{>->}[d] & \EuScript L_s^3(Q)\ar@{=}[d]\\
\EuScript L^3(Q) \ar@{->>}[d] \ar@{>->}[r] & Q\otimes Q\otimes
Q\ar@{->>}[d] \ar@{->}[r] &
Q\otimes Q\otimes Q\ar@{->>}[r] \ar@{->>}[d] & \EuScript L_s^3(Q)\\
\EuScript L^3(Q)/\EuScript L^3(U) \ar@{->}[r] & \frac{Q\otimes
Q}{U\otimes U}\otimes Q\ar@{->}[r] & Q/U\otimes Q\otimes Q}
$$
yields the natural exact sequence
\begin{equation}\label{l3seq}
0\to L_2{\mathfrak L}_s^3(Q/U)\to \EuScript L^3(Q)/\EuScript
L^3(U)\to \frac{Q\otimes Q}{U\otimes U}\otimes Q.
\end{equation}

\subsection{Metabelian Lie functor} Let $1\to R\to F\to G\to 1$ be a free presentation of a group $G$. Let $H_{ab}$ denote the abelianization $H/H'$ of the group $H$.
Again, it may be noted that the following diagram
$$
\xyma{\Lambda^2(\bar R)\otimes {\sf SP}^{n-1}(F_{ab}) \ar@{->}[d]
\ar@{->}[r] & \bar R\otimes {\sf SP}^n(F_{ab}) \ar@{>->}[d]
\ar@{->}[r] & {\sf
SP}^{n+1}(F_{ab})\ar@{=}[d]\\
\frac{\gamma_{n+1}(F)}{(\gamma_{n+1}(F)\cap
F'')\gamma_{n+2}(F)}\ar@{->>}[d] \ar@{>->}[r] & F_{ab}\otimes {\sf
SP}^n(F_{ab})\ar@{->>}[d] \ar@{->>}[r] & {\sf SP}^{n+1}(F_{ab})\\
\frac{\gamma_{n+1}(F)}{[R,\,R,\,F,\,F,\,\ldots\,,\,F_{n-1\
terms}\,](F''\cap \gamma_{n+1}(F))\gamma_{n+2}(F)}\ar@{->}[r] &
G_{ab}\otimes {\sf SP}^n(F_{ab}) }
$$
yields the natural exact sequence
\begin{multline}\label{spn}
0\to L_1{\sf SP}^{n+1}(G_{ab})\to
\frac{\gamma_{n+1}(F)}{[R,\,R,\,\underbrace{F,\,F,\,\ldots,\,F}_{n-1}\,](F''\cap
\gamma_{n+1}(F))\gamma_{n+2}(F)}\to\\ G_{ab}\otimes {\sf
SP}^n(F_{ab})\to {\sf SP}^{n+1}(G_{ab})\to 0,
\end{multline}
where $F''$ is the second derived subgroup of $F$.
 \par\vspace{.5cm}

 \section{Identification theorems}
\begin{theorem} Let $R$ and $S$ be normal subgroups of a free group $F$. Then there is a  natural isomorphism
\begin{align}
& \frac{F\cap (1+{\bf f}^2{\bf r}+{\bf sr})} {\gamma_2(R\cap
(F'S))\gamma_3(R)}\cong L_1{\sf SP}^2\left(\frac{R}{R\cap
(F'S)}\right)\label{rffrs}.
\end{align}
Moreover, if $S$ is a normal subgroup of $R$, then
\begin{align}
& \frac{F\cap (1+{\bf rfr}+{\bf sr})}{\gamma_3(R)S'}\cong L_1{\sf
SP}^2\left(\frac{R}{SR'}\right)\label{rfrrs},\\
& \frac{F\cap (1+{\bf r}^2{\bf fr}+{\bf s}^2{\bf
r})}{\gamma_4(R)\gamma_3(S)}\cong L_2{\mathfrak
L}_s^3\left(\frac{R}{SR'}\right).\label{rrfr}
\end{align}
\end{theorem}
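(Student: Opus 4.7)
The plan is to derive all three isomorphisms by applying the four-term sequence (\ref{firstdialine}) (for (\ref{rffrs}) and (\ref{rfrrs})) or the cubic sequence (\ref{l3seq}) (for (\ref{rrfr})) to $Q := R/R'$, which is a free abelian group because $R$ is a subgroup of the free group $F$, with an appropriate subgroup $U$ in each case. The Witt-type identifications $\Lambda^2(R/R') \cong R'/\gamma_3(R)$ and $\EuScript L^3(R/R') \cong \gamma_3(R)/\gamma_4(R)$, valid since $R$ is free, convert the left-hand term of each sequence into a group-theoretic quotient; (\ref{sergey}) then translates the corresponding target into a group-ring quotient.

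For (\ref{rffrs}), take $U = (R\cap F'S)/R'$, so $Q/U \cong R/(R\cap F'S)$. Since $U, Q$ are both free abelian, $\Lambda^2(U) \hookrightarrow \Lambda^2(Q)$ has image $\gamma_2(R\cap F'S)\gamma_3(R)/\gamma_3(R)$, whence $\Lambda^2(Q)/\Lambda^2(U) \cong R'/\gamma_2(R\cap F'S)\gamma_3(R)$, matching the denominator in (\ref{rffrs}). Writing any $x \in R\cap F'S$ as $f's$ with $f' \in F'$, $s \in S$, one finds $x-1 \in {\bf s} + {\bf f}^2$ (using $f'-1 \in {\bf f}^2$ and $s-1\in{\bf s}$); similarly $R'-1 \subseteq {\bf f}^2 \cap {\bf r}$. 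Applying (\ref{sergey}) with ${\bf a}={\bf r}$, ${\bf b}={\bf r}\cap({\bf s}+{\bf f}^2)$, ${\bf c}={\bf r}$, ${\bf d}={\bf fr}$ identifies the map $\Lambda^2(Q)/\Lambda^2(U) \to Q/U\otimes Q$ with the homomorphism $[r_1,r_2]\mapsto (r_1-1)(r_2-1)-(r_2-1)(r_1-1) \equiv [r_1,r_2]-1 \pmod{{\bf f}^2{\bf r} + {\bf sr}}$, whose kernel is $(F\cap(1+{\bf f}^2{\bf r}+{\bf sr}))/\gamma_2(R\cap F'S)\gamma_3(R)$.

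Part (\ref{rfrrs}) is entirely parallel with $U = SR'/R'$ (well-defined since $S\subseteq R$): $\Lambda^2(U)$ has image $S'\gamma_3(R)/\gamma_3(R)$, giving the correct denominator, and an analogous application of (\ref{sergey}) yields the target quotient $\mathbb Z[F]/({\bf rfr}+{\bf sr})$. For (\ref{rrfr}), apply (\ref{l3seq}) with the same $Q, U$; the injection $\EuScript L^3(U)\hookrightarrow\EuScript L^3(Q)$ has image $\gamma_3(S)\gamma_4(R)/\gamma_4(R)$, so $\EuScript L^3(Q)/\EuScript L^3(U) \cong \gamma_3(R)/\gamma_3(S)\gamma_4(R)$. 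Translating the target $(Q\otimes Q/U\otimes U)\otimes Q$ via (\ref{sergey}) into $\mathbb Z[F]/({\bf s}^2{\bf r}+{\bf r}^2{\bf fr})$, the map sends $[[r_2,r_3],r_1] \mapsto [[r_2,r_3],r_1]-1$ in this quotient, yielding the kernel in (\ref{rrfr}).

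The main obstacle is the precise matching of the target tensor products in (\ref{firstdialine})--(\ref{l3seq}) with the stated group-ring quotients via (\ref{sergey}): (\ref{sergey}) computes the tensor product over $\mathbb Z[F]$, while the displayed sequences use tensor over $\mathbb Z$. One must therefore choose ideal representatives for $Q/U$ and $Q$ so that the factors have trivial $F$-action (rendering the two tensor products coincident), while still yielding the stated ideals ${\bf f}^2{\bf r}+{\bf sr}$, ${\bf rfr}+{\bf sr}$ and ${\bf s}^2{\bf r}+{\bf r}^2{\bf fr}$. Simultaneously one must verify that the natural group-ring map $w\mapsto w-1$ indeed realizes the map from the derived-functor exact sequence, which reduces to checking that $[r_1,r_2]-1$ (respectively $[[r_2,r_3],r_1]-1$) matches the image of the wedge (respectively the triple-bracket) under the Witt/$\EuScript L^3$ identification modulo higher-order terms automatically lying in the denominator ideal.
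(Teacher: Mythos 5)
Your proposal is correct and follows essentially the same route as the paper: the same choices of $Q=R/R'$ and of $U$ in each case, the sequences (\ref{firstdialine}) and (\ref{l3seq}), the Witt-type identifications of $\Lambda^2(Q)/\Lambda^2(U)$ and $\EuScript L^3(Q)/\EuScript L^3(U)$, the isomorphism (\ref{sergey}) to realize the tensor targets as ideal quotients, and the observation that $F\cap(1+\mathfrak a)$ lands in $R'$ (resp.\ $\gamma_3(R)$) so that the kernel is the stated quotient. The only divergence is a bookkeeping detail in (\ref{rffrs}): rather than computing $Q/U\otimes Q$ directly with $\mathfrak b={\bf r}\cap({\bf s}+{\bf f}^2)$ as you suggest, the paper embeds it into ${\bf f}/({\bf s}+{\bf f}^2)\otimes{\bf r}/{\bf fr}\cong{\bf fr}/({\bf sr}+{\bf f}^2{\bf r})$ via the identification $F\cap(1+{\bf s}+{\bf f}^2)=SF'$ and exactness of $-\otimes R_{ab}$, which sidesteps precisely the $\otimes_{\mathbb Z[F]}$-versus-$\otimes_{\mathbb Z}$ matching issue you flag as the main obstacle.
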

\vspace{.5cm}\noindent{\it Proof of (\ref{rffrs}).} Let us set
$$
Q:=R/R'=R_{ab}\cong \frac{\bf r}{\bf fr},\quad U:=\frac{R\cap
(SF')}{R'}.
$$
Observe that there is a natural monomorphism
$$
\frac{R}{R\cap (SF')}\otimes R_{ab}\hookrightarrow \frac{\bf
f}{{\bf s}+{\bf f}^2}\otimes \frac{\bf r}{\bf fr}\cong \frac{\bf
fr}{{\bf sr}+{\bf f}^2{\bf r}}
$$
resulting  from the well-known identification of the second
dimension subgroup: $$F\cap (1+{\bf s}+{\bf f}^2)=SF'.$$ Next
observe that
$$
\Lambda^2(Q)/\Lambda^2(U)\cong \frac{R'} {\gamma_2(R\cap
(SF'))\gamma_3(F)},
$$and
$$
F\cap (1+{\bf sr}+{\bf f}^2{\bf r})\subseteq R',
$$
since ${\bf sr}+{\bf f}^2{\bf r}\subset {\bf fr}$ and $F\cap
(1+{\bf fr})=R'$. Thus (\ref{rffrs}) follows from
(\ref{firstdialine}).\ \ $\Box$

\par\vspace{.5cm}\noindent{\it Proof of (\ref{rfrrs}).} Let us set
$$
Q:=R/R'=R_{ab}\cong\frac{\bf r}{\bf fr},\quad U:=\frac{SR'}{R'}.
$$
Then \begin{align*}
& \Lambda^2(Q)/\Lambda^2(U)\cong \frac{R'}{S'\gamma_3(R)}\\
& Q/U\otimes Q\cong \frac{\bf r}{\bf s+rf}\otimes \frac{\bf r}{\bf
fr}\cong \frac{{\bf r}^2}{{\bf sr}+{\bf rfr}}.
\end{align*}
Since $F\cap (1+{\bf sr}+{\bf rfr})\subseteq R'$, the isomorphism
(\ref{rfrrs})  follows from (\ref{firstdialine}).\ \ $\Box$

\par\vspace{.5cm}\noindent {\it Proof of
(\ref{rrfr}).} In order to prove (\ref{rrfr}), observe that
\begin{align*}
& \EuScript L^3(Q)/\EuScript L^3(U)\cong\frac{\gamma_3(R)}{\gamma_3(S)\gamma_4(R)}\\
& \frac{Q\otimes Q}{U\otimes U}\otimes Q\cong\frac{\bf r^2}{{\bf
s}^2+{\bf r}^2{\bf f}}\otimes \frac{\bf r}{\bf fr}\cong\frac{{\bf
r}^3}{{\bf s}^2{\bf r}+{\bf r}^2{\bf fr}}.
\end{align*}
Since $F\cap (1+ {\bf s}^2{\bf r}+{\bf r}^2{\bf fr})\subseteq
F\cap (1+{\bf r}^2{\bf f})=\gamma_3(R)$, the isomorphism
(\ref{rrfr}) follows from the exact sequence (\ref{l3seq}).\ \
$\Box$

\bigskip

We next exhibit certain quotients constructed from a free
presentation $$1\to R\to F\to G\to 1$$ of a group $G$ which are
independent of the chosen free presentation, and in fact depend
only on the second integral homology group $H_2(G)$.
\par\vspace{.25cm}
\begin{theorem}\label{main} Let $1\to R\to F\to G\to 1$ be a free presentation of a group $G$, and let $S= [R,\,F]$. Then there are natural isomorphisms
\begin{align}
& \frac{F\cap (1+{\bf rfr}+{\bf s}{\bf
r})}{\gamma_2(S)\gamma_3(R)}\cong L_1{\sf SP}^2(H_2(G))\label{sph21}.\\
& \frac{F\cap (1+{\bf rfr}+{\bf
r}^2{\bf f})}{\gamma_2(S)\gamma_3(R)}\cong L_1{\sf SP}^2(H_2(G))\label{sph22}.\\
& \frac{F\cap (1+{\bf s}^2{\bf r}+{\bf r}^2{\bf
fr})}{\gamma_3(S)\gamma_4(R)}\cong L_2{\mathfrak
L}_s^3(H_2(G)).\label{sph23}
\end{align}
\end{theorem}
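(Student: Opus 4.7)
The plan is to specialize the preceding theorem to $S := [R,F]$ and then replace $R/SR'$ by $H_2(G)$ via Hopf's formula. Since $R'=[R,R]\subseteq [R,F]=S$, one has $SR'=S$, so $R/SR'=R/[R,F]$ and $S'=\gamma_2(S)$; also $S$ is normal in $F$, hence in $R$, so the hypotheses of the preceding theorem are satisfied. With this substitution, equation (\ref{rfrrs}) gives directly
\[
\frac{F\cap(1+{\bf rfr}+{\bf sr})}{\gamma_2(S)\gamma_3(R)}\cong L_1{\sf SP}^2(R/[R,F]),
\]
and equation (\ref{rrfr}) gives the analogue for $L_2\mathfrak L_s^3$. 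Thus (\ref{sph21}) and (\ref{sph23}) reduce to the identifications $L_1{\sf SP}^2(R/[R,F])\cong L_1{\sf SP}^2(H_2(G))$ and $L_2\mathfrak L_s^3(R/[R,F])\cong L_2\mathfrak L_s^3(H_2(G))$.

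For these identifications I would invoke Hopf's formula, giving the short exact sequence
\[
0\to H_2(G)\to R/[R,F]\to R/(R\cap F')\to 0.
\]
Since $R/(R\cap F')\cong RF'/F'$ is a subgroup of the free abelian group $F/F'$, hence free abelian, the sequence splits: $R/[R,F]\cong H_2(G)\oplus K$ with $K$ free abelian. Applying a polynomial endofunctor $\mathcal F$ and using its cross-effect decomposition, every summand other than $\mathcal F(H_2(G))$ is either $\mathcal F(K)$ (with $L_p=0$ for $p>0$ since $K$ is projective) or a bifunctorial cross-effect of the form $\mathcal G(H_2(G))\otimes\mathcal H(K)$ with $\mathcal G$ polynomial of strictly smaller degree and $\mathcal H(K)$ free abelian; since tensoring with a free abelian group is exact, the $L_p$ of such summands reduces to $L_p\mathcal G(H_2(G))\otimes\mathcal H(K)$, and this vanishes in the required range ($L_1$ of the identity functor is zero, as are $L_2$ of the identity, $\Lambda^2$, and ${\sf SP}^2$). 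This gives the desired identifications and completes (\ref{sph21}) and (\ref{sph23}).

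For (\ref{sph22}) an extra step is required, since the ideal ${\bf rfr}+{\bf r}^2{\bf f}$ does not arise directly from the Lemma-\ref{sergey} realization of $Q/U\otimes Q$ used in (\ref{rfrrs}) (the representations of $R/[R,F]$ as an ideal quotient of ${\bf r}$ inevitably inject an ${\bf sr}$-term into the denominator). The plan is to show that, modulo $\gamma_2(S)\gamma_3(R)$, the subgroups $F\cap(1+{\bf rfr}+{\bf sr})$ and $F\cap(1+{\bf rfr}+{\bf r}^2{\bf f})$ coincide, so that (\ref{sph22}) reduces to (\ref{sph21}). Both contain $\gamma_2(S)\gamma_3(R)$ (for the denominator side one checks $\gamma_3(R)-1\subseteq{\bf r}^3\subseteq{\bf rfr}$ and $\gamma_2(S)-1\subseteq{\bf s}^2\subseteq{\bf rfr}+{\bf r}^2{\bf f}$ using that ${\bf s}\subseteq{\bf rf}+{\bf fr}$ and that ${\bf rfr}$ is two-sided). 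The equality modulo this subgroup is obtained from the commutator identity
\[
(r_1-1)(r_2-1)(f-1)=(r_1-1)(f-1)(r_2-1)-(r_1-1)\,r_2f\,([f,r_2]-1)
\]
and its analogue obtained by swapping the other pair, together with the expansion of $(s-1)(r-1)$ using $s-1=r_0^{-1}f_0^{-1}[(r_0-1)(f_0-1)-(f_0-1)(r_0-1)]$ for $s=[r_0,f_0]$; the residual terms, which a priori lie in $([R,[R,F]]-1)\mathbb Z[F]$, are then absorbed modulo $\gamma_2(S)\gamma_3(R)$ by iterated application of the three-subgroup lemma. The main obstacle is precisely this last step: the inclusion $[R,[R,F]]\subseteq\gamma_2(S)\gamma_3(R)$ does not hold in general (in the associated graded of a free group the left side lies in weight $5$ while the right lies in weight $\geq 6$), so the reduction must be carried out at the level of explicit expressions, using the special form of the commutators produced by the swapping identities rather than as a blanket containment of subgroups.
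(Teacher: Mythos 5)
Your handling of (\ref{sph21}) and (\ref{sph23}) is correct and is essentially the paper's argument: with $S=[R,F]$ one has $SR'=S$ and $\gamma_3(R)S'=\gamma_2(S)\gamma_3(R)$, so (\ref{rfrrs}) and (\ref{rrfr}) give the left-hand quotients as $L_1{\sf SP}^2(R/[R,F])$ and $L_2\mathfrak L_s^3(R/[R,F])$ (the paper redoes the same computation with $Q=R_{ab}$, $U=S/R'$ rather than citing the earlier theorem), and the passage to $H_2(G)$ via the split Hopf sequence $R/[R,F]\cong H_2(G)\oplus(\text{free abelian})$ and cross-effects is exactly what the authors do --- they quote the Dold--Puppe formula $L_1{\sf SP}^2(A\oplus B)=L_1{\sf SP}^2(A)\oplus L_1{\sf SP}^2(B)\oplus{\sf Tor}(A,B)$ and leave the analogous vanishing for $L_2\mathfrak L_s^3$ implicit, much as you do.

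The genuine gap is in (\ref{sph22}), and it sits exactly where you flag it. Your plan to show $F\cap(1+{\bf rfr}+{\bf sr})$ and $F\cap(1+{\bf rfr}+{\bf r}^2{\bf f})$ agree modulo $\gamma_2(S)\gamma_3(R)$ by expanding $(s-1)(r-1)$ and absorbing residual terms is not carried out, and, as you yourself note, the containment $[R,[R,F]]\subseteq\gamma_2(S)\gamma_3(R)$ that would make the absorption routine is false; so this part of the proposal is a programme, not a proof. The paper's route is structural and avoids element-level commutator calculus entirely: both intersections with $F$ lie in $R'$ and both arise as kernels of the map $w\mapsto w-1$ from $R'/(\gamma_2(S)\gamma_3(R))\cong\Lambda^2(Q)/\Lambda^2(U)$ into quotients of ${\bf r}^2$. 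The quotient ${\bf r}^2/({\bf sr}+{\bf rfr})$ realizes $R/S\otimes R_{ab}$ exactly by (\ref{sergey}), while the identification $F\cap(1+{\bf fr}+{\bf rf})=[R,F]=S$ embeds $R/S$ into ${\bf r}/({\bf fr}+{\bf rf})$, and hence (tensoring with $R_{ab}={\bf r}/{\bf fr}$ and applying (\ref{sergey}) again, up to the left/right symmetric variant that produces ${\bf r}^2{\bf f}$ in place of ${\bf fr}^2$) embeds $R/S\otimes R_{ab}$ into the quotient of ${\bf r}^2$ by the larger ideal. Enlarging the denominator ideal therefore does not enlarge the kernel, and both kernels equal $L_1{\sf SP}^2(R/S)$ by (\ref{firstdialine}). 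This injectivity observation is the missing idea; without it, or without a completed commutator computation replacing it, (\ref{sph22}) is not established.
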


\vspace{.5cm}\noindent{\it Proof of (\ref{sph21}) and
(\ref{sph22}).} Let us set $$Q:=R/R'=R_{ab}\cong\frac{\bf r}{\bf
fr},\quad U:=S/R'.
$$
Then
$$
Q/U\otimes Q\cong R/S\otimes \frac{\bf r}{\bf fr}\cong\frac{\bf
r}{(S-1)+{\bf rf}}\otimes \frac{\bf r}{\bf fr}\cong\frac{{\bf
r}^2}{(S-1){\bf r}+{\bf rfr}},
$$
and, since $F\cap (1+{\bf fr}+{\bf rf})=S$,
$$
Q/U\otimes Q\hookrightarrow \frac{\bf r}{{\bf fr}+{\bf rf}}\otimes
\frac{\bf r}{\bf fr}\cong\frac{{\bf r}^2}{{\bf fr}^2+{\bf rfr}}.
$$
The exact sequence (\ref{firstdialine}) implies that the left hand
quotients in (\ref{sph21}) and (\ref{sph22}) are naturally
isomorphic to $L_1{\sf SP}^2(R/S)$. Note that there is a natural
isomorphism
$$
L_1{\sf SP}^2(R/S)\cong L_1{\sf SP}^2(H_2(G)).$$
 To see this, observe that
$$
R/S\cong H_2(G)\oplus \mathcal F
$$with $\mathcal F$ a  free\ abelian\ group,
and the asserted statements (\ref{sph21}) and (\ref{sph22}) thus
follow from the cross-effect formula for the functor $L_1{\sf
SP^2}$ (see, for example, (10.3) \cite{DP:1961}):$$ L_1{\sf
SP}^2(A\oplus B)=L_1{\sf SP}^2(A)\oplus L_1{\sf SP}^2(B)\oplus
{\sf Tor}(A,\,B).$$

To prove (\ref{sph23}), observe that the sequence \ref{l3seq}
implies that there is an exact sequence
\begin{equation}\label{mono7} 0\to L_2{\mathfrak L}_s^3(R/S)\to
\frac{\gamma_3(R)}{\gamma_3(S)\gamma_4(R)}\to \frac{{\bf
r}^2}{{\bf r}^2{\bf f}+{\bf s}^2}\otimes \frac{\bf r}{\bf fr}.
\end{equation}
Thus the isomorphism (\ref{sph23}) follows from the natural
isomorphisms
$$
L_2{\mathfrak L}_s^3(R/S)=L_2{\mathfrak L}_s^3(H_2(G))
$$
and
$$
\frac{{\bf r}^2}{{\bf r}^2{\bf f}+{\bf s}^2}\otimes \frac{\bf
r}{\bf fr}=\frac{{\bf r}^3}{{\bf r}^2{\bf fr}+{\bf s}^2\bf r}.
$$
$\Box$
\par\vspace{.5cm}
Using an implication, on the torsion in $L_1{\sf SP}^2(H_2(G))$,
of the result of R. St$\ddot{o}$hr \cite{Stohr:1984} (see also Yu.
V. Kuzmin \cite{Kuzmin:1982}) on the torsion in
$F/[\gamma_c(R),\,F]$, we immediately have  the following result.
\par\vspace{.25cm}
\begin{corollary}
If  $R$ is a norml subgroup of a free group $F$, $c\geq 2$ an
integer, and  $S=\gamma_c(R)$, then
$$
\frac{F\cap (1+{\bf sfs}+([S,\,F]-1){\bf
s})}{\gamma_2([S,\,F])\gamma_3(S)}
$$
is a torsion group of  exponent dividing   $c^2$.
\end{corollary}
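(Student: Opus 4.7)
The plan is to reduce the corollary to Theorem~\ref{main}\,(\ref{sph21}) by substituting $\gamma_c(R)$ for the ambient normal subgroup ``$R$'' of that theorem. Under this substitution the ``$S$'' of the theorem becomes $[\gamma_c(R),F]=[S,F]$; the ideal ${\bf rfr}$ becomes ${\bf sfs}$ and ${\bf sr}$ becomes $([S,F]-1){\bf s}$, so (\ref{sph21}) yields
$$\frac{F\cap (1+{\bf sfs}+([S,\,F]-1){\bf s})}{\gamma_2([S,\,F])\gamma_3(S)}\cong L_1{\sf SP}^2(H_2(F/\gamma_c(R))).$$
Since $c\ge 2$ forces $\gamma_c(R)\subseteq R'\subseteq [F,F]$, Hopf's formula identifies $H_2(F/\gamma_c(R))$ with $\gamma_c(R)/[\gamma_c(R),F]$.

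I would then invoke the St\"ohr--Kuzmin theorem referenced in the statement: the torsion subgroup of $F/[\gamma_c(R),F]$ has exponent dividing $c$. Since $\gamma_c(R)/[\gamma_c(R),F]$ is a (central) subgroup of $F/[\gamma_c(R),F]$, its torsion subgroup $T$ inherits the bound $cT=0$. Because a torsion abelian group of bounded exponent is a direct summand of any containing abelian group, we may write
$$H_2(F/\gamma_c(R))=T\oplus V,\qquad cT=0,\ V\text{ torsion-free}.$$
The cross-effect formula
$$L_1{\sf SP}^2(T\oplus V)\cong L_1{\sf SP}^2(T)\oplus L_1{\sf SP}^2(V)\oplus {\sf Tor}(T,V)$$
reduces the computation to $L_1{\sf SP}^2(T)$: the Tor summand vanishes since $V$ is flat, and $L_1{\sf SP}^2(V)=0$ because every torsion-free abelian group is a filtered colimit of finitely generated free abelian groups, and $L_1{\sf SP}^2$ commutes with such colimits and vanishes on free abelian groups.

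To bound $L_1{\sf SP}^2(T)$ I would present $T=Q/U$ with $Q$ free abelian; since $cT=0$ we have $cQ\subseteq U$. The four-term exact sequence (\ref{firstdialine}) embeds $L_1{\sf SP}^2(T)$ into $\Lambda^2(Q)/\Lambda^2(U)$, and the identity $\Lambda^2(cQ)=c^2\Lambda^2(Q)$ inside the torsion-free group $\Lambda^2(Q)$, combined with $cQ\subseteq U$, gives $c^2\Lambda^2(Q)\subseteq\Lambda^2(U)$; hence $c^2 L_1{\sf SP}^2(T)=0$, which is the claimed bound. The only genuinely input-dependent step is the appeal to the St\"ohr--Kuzmin bound on the torsion in $F/[\gamma_c(R),F]$; everything else is a formal combination of Theorem~\ref{main}, the cross-effect formula, and the sequence (\ref{firstdialine}).
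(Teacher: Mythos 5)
Your proposal is correct and is essentially the argument the paper leaves implicit in its one-line proof: apply Theorem \ref{main}, isomorphism (\ref{sph21}), with $\gamma_c(R)$ playing the role of $R$ (so that $[R,F]$ becomes $[S,F]$), identify $H_2(F/\gamma_c(R))\cong\gamma_c(R)/[\gamma_c(R),F]$ by Hopf's formula, and convert the St\"ohr--Kuzmin bound on torsion in $F/[\gamma_c(R),F]$ into a bound on $L_1{\sf SP}^2$ via the embedding into $\Lambda^2(Q)/\Lambda^2(U)$ from (\ref{firstdialine}). One small wording correction: a bounded torsion group need not be a direct summand of an arbitrary overgroup (e.g.\ $2\mathbb{Z}/4\mathbb{Z}\subset\mathbb{Z}/4\mathbb{Z}$); the splitting $H_2=T\oplus V$ you use is valid because $T$ is the \emph{torsion subgroup}, hence pure, and bounded pure subgroups are direct summands (Baer).
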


\par\vspace{.25cm}
\par\vspace{.25cm}
\begin{theorem}
If $R$ and $S$ are  normal subgroups of a free group $F$ with  $S$
normal in $ R$, then there is a natural isomorphism $$\frac{F\cap
(1+{\bf
rf}+{\bf fs}+{\bf f}^3)}{[F,S]R'\gamma_3(F)}\cong\\
{\sf Coker}\{{\sf Tor}((F/R)_{ab}, \,(F/S)_{ab})\to L_1{\sf
SP}^2((F/R)_{ab})\}.$$
\end{theorem}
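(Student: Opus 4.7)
The plan is to apply Theorem~\ref{lambdalemma} with $E := (F/S)_{ab}$ and $I \subset E$ taken to be the image of $R$, so that $E/I = F/RF' = (F/R)_{ab}$ (note that $S\subseteq R$ gives $SF'\subseteq RF'$, so there is a well-defined surjection $(F/S)_{ab}\to (F/R)_{ab}$). Theorem~\ref{lambdalemma} then immediately yields a natural isomorphism
$${\sf Coker}\{{\sf Tor}((F/R)_{ab},(F/S)_{ab}) \to L_1{\sf SP}^2((F/R)_{ab})\} \cong {\sf Ker}\{\Lambda^2(E)/\underline{\Lambda^2(I)}\to E/I\otimes E\},$$
and the bulk of the remaining work is to identify this right-hand kernel with the quotient $F\cap(1+{\bf rf}+{\bf fs}+{\bf f}^3)/[F,S]R'\gamma_3(F)$ on ring-theoretic terms.

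For the $\Lambda^2$ side, I would combine the Hopf formula $\Lambda^2(F_{ab})\cong F'/\gamma_3(F)$ with the right-exact sequence describing $\Lambda^2$ applied to the surjection $F/F'\to E$: the kernel of $\Lambda^2(F/F')\to \Lambda^2(E)$ is generated by $\bar s\wedge\bar x$ for $s\in S$, $x\in F$, which reads as $[S,F]\gamma_3(F)/\gamma_3(F)$ in $F'/\gamma_3(F)$, giving $\Lambda^2(E)\cong F'/[F,S]\gamma_3(F)$. The same commutator calculus identifies $\underline{\Lambda^2(I)}$ as the image of the commutators $[r_1,r_2]$ with $r_i\in R$, so
$$\Lambda^2(E)/\underline{\Lambda^2(I)}\cong F'/[F,S]R'\gamma_3(F),$$
which is precisely the denominator in the statement.

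For the target, the isomorphism \eqref{sergey} (applied with ${\sf Tor}({\mathbb Z},{\mathbb Z})=0$) gives $E/I\otimes E\cong {\bf f}^2/({\bf rf}+{\bf fs}+{\bf f}^3)$. I would then argue that the natural map $\Lambda^2(E)/\underline{\Lambda^2(I)}\to E/I\otimes E$ is induced by $w\mapsto w-1$; this rests on the standard expansion $[x,y]-1\equiv (x-1)(y-1)-(y-1)(x-1)\pmod{{\bf f}^3}$. Well-definedness modulo the target ideal is then routine: for $w=[f,s]$ with $s\in S$ the two summands lie in ${\bf fs}$ and ${\bf sf}\subseteq{\bf rf}$ respectively (using $S\subseteq R$); for $w=[r_1,r_2]$ both summands lie in ${\bf r}^2\subseteq{\bf rf}$; and $\gamma_3(F)-1\subseteq{\bf f}^3$ by the classical dimension subgroup theorem.

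Once the map is identified, the kernel computation is essentially formal: $w\in F'$ is annihilated iff $w-1\in{\bf rf}+{\bf fs}+{\bf f}^3$, and since $F\cap(1+{\bf rf}+{\bf fs}+{\bf f}^3)\subseteq F\cap(1+{\bf f}^2)=F'$, this kernel is exactly $F\cap(1+{\bf rf}+{\bf fs}+{\bf f}^3)$ modulo $[F,S]R'\gamma_3(F)$, finishing the proof. The step I expect to be most delicate is verifying that the Koszul-type map furnished by Theorem~\ref{lambdalemma} really does correspond to $w\mapsto w-1$ under the ring-theoretic identifications above, and that all the relevant ideal containments (especially ${\bf sf}\subseteq{\bf rf}$) are organized so as to match the precise denominator ${\bf rf}+{\bf fs}+{\bf f}^3$ appearing in the statement rather than a symmetrized variant.
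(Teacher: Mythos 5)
Your proposal is correct and follows essentially the same route as the paper: both set $E=(F/S)_{ab}$, $I=RF'/SF'$, identify $\Lambda^2(E)/\underline{\Lambda^2(I)}\cong F'/[F,S]R'\gamma_3(F)$ and $E/I\otimes E\cong {\bf f}^2/({\bf rf}+{\bf fs}+{\bf f}^3)$ via (\ref{sergey}), and then invoke Theorem~\ref{lambdalemma}. The only difference is that you spell out the verification (correctly) that the Koszul map becomes $w\mapsto w-1$ and that the kernel is the stated generalized dimension subgroup, details the paper leaves implicit.
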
\par\vspace{.25cm}
\begin{proof}
Let us set
$$
E:=F/SF'=(F/S)_{ab},\quad I:=RF'/SF'.
$$
We then have natural isomorphisms
$$
\Lambda^2(E)\cong =\frac{\gamma_2(F)}{[F,\,S]\gamma_3(F)},\quad
\Lambda^2(E)/\underline{\Lambda^2(I)}\cong
\frac{\gamma_2(F)}{[F,\,S]R'\gamma_3(F)},
$$
and
$$
E/I\otimes E\cong\frac{\bf f}{{\bf r}+{\bf f}^2}\otimes \frac{\bf
f}{{\bf s}+{\bf f}^2}\cong\frac{{\bf f}^2}{{\bf rf}+{\bf fs}+{\bf
f}^3}.
$$
The assertion in the theorem thus  follows from Theorem
\ref{lambdalemma}.
\end{proof}\par\vspace{.25cm}

\begin{theorem} If $1\to R\to F\to G\to 1$ is a free presentation of a group $G$, and $n\geq 2$ an integer, then
 there is a natural isomorphism
 $$\frac{F\cap (1+{\bf f}((F'-1)\mathbb Z[F]\cap {\bf f}^n)+{\bf r}{\bf f}^n+{\bf f}^{n+2})}{[R,\,R,\,\underbrace{F,\,\ldots\,, F}_{n-1}](F''\cap
\gamma_{n+1}(F))\gamma_{n+2}(F)}\cong L_1{\sf SP}^{n+1}(G_{ab}).$$
\end{theorem}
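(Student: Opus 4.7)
The plan is to derive the theorem directly from the exact sequence (\ref{spn}) of Section~3 by giving an ideal-theoretic description of both the target $G_{ab}\otimes {\sf SP}^n(F_{ab})$ and the associated map
$$
\varphi\colon \frac{\gamma_{n+1}(F)}{D}\longrightarrow G_{ab}\otimes {\sf SP}^n(F_{ab}),
$$
where $D:=[R,R,\underbrace{F,\ldots,F}_{n-1}](F''\cap \gamma_{n+1}(F))\gamma_{n+2}(F)$ is the denominator displayed in the statement. Once $G_{ab}\otimes {\sf SP}^n(F_{ab})$ is realized as an explicit quotient of ${\bf f}^{n+1}$ and $\varphi$ is exhibited as the map induced by $w\mapsto w-1$, the conclusion will follow from exactness of (\ref{spn}) together with the classical dimension subgroup identity $F\cap (1+{\bf f}^{n+1})=\gamma_{n+1}(F)$.

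The central intermediate step is the natural identification
$$
{\sf SP}^n(F_{ab})\cong \frac{{\bf f}^n}{(F'-1)\mathbb Z[F]\cap {\bf f}^n+{\bf f}^{n+1}},
$$
which rests on the classical isomorphism ${\bf f}^n/{\bf f}^{n+1}\cong F_{ab}^{\otimes n}$ given by $(y_1-1)\cdots (y_n-1)\mapsto \bar y_1\otimes\cdots \otimes \bar y_n$. Under the latter, the kernel of the symmetrization $F_{ab}^{\otimes n}\twoheadrightarrow {\sf SP}^n(F_{ab})$ is generated, modulo ${\bf f}^{n+1}$, by the commutator identities $(y_i-1)(y_{i+1}-1)-(y_{i+1}-1)(y_i-1)\equiv [y_i,y_{i+1}]-1$, and hence is exactly the image of $(F'-1)\mathbb Z[F]\cap {\bf f}^n$. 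Feeding this description into (\ref{sergey}) with $\mathfrak a={\bf f}$, $\mathfrak b={\bf r}+{\bf f}^2$, $\mathfrak c={\bf f}^n$, $\mathfrak d=(F'-1)\mathbb Z[F]\cap {\bf f}^n+{\bf f}^{n+1}$ --- the required ${\sf Tor}$-vanishing holds because $\mathbb Z[F]/{\bf f}\cong \mathbb Z$ is $\mathbb Z$-free --- yields
$$
G_{ab}\otimes {\sf SP}^n(F_{ab})\cong \frac{{\bf f}^{n+1}}{{\bf r}{\bf f}^n+{\bf f}((F'-1)\mathbb Z[F]\cap {\bf f}^n)+{\bf f}^{n+2}},
$$
whose defining ideal is precisely the one appearing in the numerator of the theorem.

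Under these identifications, $\varphi$ is induced by $w\mapsto w-1$; since the ideal is contained in ${\bf f}^{n+1}$, the dimension subgroup theorem gives
$$
\ker\varphi= \frac{F\cap\bigl(1+{\bf r}{\bf f}^n+{\bf f}((F'-1)\mathbb Z[F]\cap {\bf f}^n)+{\bf f}^{n+2}\bigr)}{D},
$$
and exactness of (\ref{spn}) completes the argument. The principal obstacle is justifying the identification ${\sf SP}^n(F_{ab})\cong {\bf f}^n/((F'-1)\mathbb Z[F]\cap {\bf f}^n+{\bf f}^{n+1})$: containment of the right-hand side into the kernel of symmetrization on $F_{ab}^{\otimes n}$ is a direct commutator computation, but the reverse --- that the two-sided ideal $(F'-1)\mathbb Z[F]$ captures \emph{all} non-symmetric combinations at each graded level --- requires a combinatorial argument via a Hall-basis decomposition of ${\bf f}^n/{\bf f}^{n+1}$ into its Lie and symmetric components, valid because $F_{ab}$ is $\mathbb Z$-free.
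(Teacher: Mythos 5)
Your proposal is correct and follows essentially the same route as the paper: apply the isomorphism (\ref{sergey}) to realize $G_{ab}\otimes {\sf SP}^n(F_{ab})$ as ${\bf f}^{n+1}/({\bf r}{\bf f}^n+{\bf f}((F'-1)\mathbb Z[F]\cap {\bf f}^n)+{\bf f}^{n+2})$ and then read off the kernel of the map in the exact sequence (\ref{spn}) via $w\mapsto w-1$ and $F\cap(1+{\bf f}^{n+1})=\gamma_{n+1}(F)$. The only difference is that you spell out the identification ${\sf SP}^n(F_{ab})\cong {\bf f}^n/((F'-1)\mathbb Z[F]\cap{\bf f}^n+{\bf f}^{n+1})$ (which the paper takes as known; it is the classical fact that the associated graded of the augmentation filtration of $\mathbb Z[F_{ab}]$ is the symmetric algebra, combined with the modular law), so your extra care there is fine but not a departure from the paper's argument.
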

\begin{proof}
By (\ref{sergey}), there is a natural isomorphism
\begin{multline*}
(F/R)_{ab}\otimes {\sf SP}^n(F_{ab})\cong\frac{\bf f}{{\bf r}+{\bf
f}^2}\otimes \frac{{\bf f}^n}{(F'-1)\mathbb Z[F]\cap {\bf
f}^n+{\bf f}^{n+1}}\cong\\ \frac{{\bf f}^{n+1}}{{\bf rf}^n+{\bf
f}((F'-1)\mathbb Z[F]\cap {\bf f}^n)+{\bf f}^{n+2}}
\end{multline*}
The asserted statement thus follows from the sequence (\ref{spn}).
\end{proof}

\par\vspace{.25cm}
 In particular, taking $n=2,\, 3$, the preceding theorem yields the following interesting result.
\begin{corollary} There are natural isomorphisms
\begin{align*}
& \frac{F\cap (1+{\bf f}(F'-1)+{\bf r}{\bf f}^2+{\bf
f}^4)}{[R,\,R,\,F]\gamma_4(F)}\cong L_1{\sf SP}^3(G_{ab}).\\
& \frac{F\cap (1+{\bf f}((F'-1)\mathbb Z[F]\cap {\bf f}^3)+{\bf
r}{\bf f}^3+{\bf f}^5)}{[R,\,R,\,F,\,F]F''\gamma_5(F)}\cong
L_1{\sf SP}^4(G_{ab}).
\end{align*}
\end{corollary}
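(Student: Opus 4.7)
The plan is to derive both isomorphisms as direct specializations of the preceding theorem at $n=2$ and at $n=3$, respectively; no new Koszul-type resolution or derived-functor computation is needed beyond what has already been carried out there. Throughout, the only ideal/subgroup identities I will need are the containments $(F'-1)\mathbb Z[F]\subseteq {\bf f}^2$ and $F''\subseteq\gamma_4(F)$. The first holds because $F'=\gamma_2(F)$, so $f'-1\in {\bf f}^2$ for every $f'\in F'$, and these elements generate $(F'-1)\mathbb Z[F]$ as a two-sided ideal. The second is the standard commutator-series inclusion $F''=[\gamma_2(F),\gamma_2(F)]\subseteq\gamma_4(F)$.

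For the first isomorphism I would specialize the preceding theorem at $n=2$. The numerator given by the theorem reads
$$F\cap\bigl(1+{\bf f}((F'-1)\mathbb Z[F]\cap {\bf f}^2)+{\bf r}{\bf f}^2+{\bf f}^4\bigr);$$
by the containment above, the intersection collapses to $(F'-1)\mathbb Z[F]$, which is exactly what the statement abbreviates as $(F'-1)$. The denominator given by the theorem is $[R,R,F](F''\cap\gamma_3(F))\gamma_4(F)$; using $F''\subseteq\gamma_4(F)\subseteq\gamma_3(F)$, both the intersection and the $F''$-factor are absorbed, and the product simplifies to $[R,R,F]\gamma_4(F)$.

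For the second isomorphism I would specialize the same theorem at $n=3$. The numerator is already in exactly the stated form, with $(F'-1)\mathbb Z[F]\cap {\bf f}^3$ left unsimplified (for $n\ge 3$ this is in general a proper subideal of $(F'-1)\mathbb Z[F]$, so no collapse occurs). The denominator furnished by the theorem is $[R,R,F,F](F''\cap\gamma_4(F))\gamma_5(F)$, and the inclusion $F''\subseteq\gamma_4(F)$ lets me replace $F''\cap\gamma_4(F)$ by $F''$, yielding the stated $[R,R,F,F]F''\gamma_5(F)$. The main ``obstacle'' is therefore purely the routine ideal-and-normal-subgroup bookkeeping described above; all the genuine mathematical content resides in the preceding theorem, which in turn rests on the Koszul-type exact sequence (\ref{spn}) and the identification (\ref{sergey}).
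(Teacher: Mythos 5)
Your proposal is correct and coincides with the paper's own argument: the corollary is obtained simply by specializing the preceding theorem at $n=2$ and $n=3$, with the same routine simplifications $(F'-1)\mathbb Z[F]\subseteq {\bf f}^2$ and $F''\subseteq\gamma_4(F)$ absorbing the intersection and the $F''$-factor. The paper states this in one line ("taking $n=2,\,3$"), and your write-up just makes that bookkeeping explicit.
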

\par\vspace{.25cm}
\begin{theorem} Let $R$ and $S$ be normal subgroups of a free group $F$. Then there is a natural isomorphism
\begin{equation}
 \frac{F\cap (1+{\bf f}^2{\bf r}^2+{\bf f}(R'-1)+{\bf s}{\bf r}^2)}{[R\cap (SF'),\,R\cap (SF'),\,R]\gamma_4(R)}\cong L_1{\sf
SP}^3\left(\frac{R}{R\cap (SF')}\right)
\end{equation}
\end{theorem}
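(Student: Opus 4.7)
The plan is to apply the argument producing the exact sequence (\ref{spn}) in a new setting: with $F$ replaced by the (still free) group $R$ and the normal subgroup $R$ (of $F$) replaced by $R\cap SF'$, which is normal in $R$ and contains $R'$ (because $R'\subseteq F'\subseteq SF'$). Setting $Q = R_{ab}$ and $U = (R\cap SF')/R'$, so that $Q/U\cong R/(R\cap SF')$, this derivation with $n=2$ yields the four-term exact sequence
$$
0 \to L_1{\sf SP}^3(Q/U) \to \frac{\gamma_3(R)}{[R\cap SF',\,R\cap SF',\,R](R''\cap\gamma_3(R))\gamma_4(R)} \to (Q/U)\otimes{\sf SP}^2(Q) \to {\sf SP}^3(Q/U) \to 0.
$$
Since $R''=[R',R']\subseteq\gamma_4(R)$, the factor $R''\cap\gamma_3(R)$ is absorbed into $\gamma_4(R)$, so the denominator simplifies to $[R\cap SF',\,R\cap SF',\,R]\gamma_4(R)$, as required by the theorem.

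The remaining task is to identify the kernel of the middle arrow with the stated subgroup modulo the denominator. For this I would express $(Q/U)\otimes{\sf SP}^2(Q)=R/(R\cap SF')\otimes{\sf SP}^2(R_{ab})$ as a quotient of ideals in $\mathbb Z[F]$. The crucial observation, parallel to one used in the proof of (\ref{rffrs}), is that the conjugation action of $F$ on $R/(R\cap SF')$ is trivial: for any $r\in R$ and $f\in F$, the commutator $[f,r^{-1}]$ lies in $[F,R]\subseteq R\cap F'\subseteq R\cap SF'$. This triviality allows the $\mathbb Z$-tensor product to be identified with the $\mathbb Z[F]$-tensor product, to which (\ref{sergey}) applies. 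Combining the identifications $R_{ab}\cong{\bf r}/{\bf fr}$, ${\sf SP}^2(R_{ab})\cong{\bf r}^2/({\bf fr}^2+{\bf rfr}+(R'-1)\mathbb Z[F])$ (the symmetric square arising from the tensor square by killing the antisymmetric elements $(r-1)(r'-1)-(r'-1)(r-1)=([r,r']-1)\cdot r'r\in(R'-1)\mathbb Z[F]$), and the monomorphism $R/(R\cap SF')\hookrightarrow F/SF'\cong{\bf f}/({\bf s}+{\bf f}^2)$, an application of (\ref{sergey}) yields an embedding
$$
R/(R\cap SF')\otimes{\sf SP}^2(R_{ab}) \hookrightarrow \frac{{\bf fr}^2}{{\bf sr}^2+{\bf f}^2{\bf r}^2+{\bf f}(R'-1)\mathbb Z[F]}
$$
(after absorbing the extra ideal ${\bf frfr}$, discussed below). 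Since $\gamma_3(R)=F\cap(1+{\bf f}^2{\bf r})$ (an equality used also in the proof of (\ref{rrfr})) and the theorem's ideal is contained in ${\bf f}^2{\bf r}$, the kernel of the middle arrow is then identified with the quotient of $F\cap(1+{\bf f}^2{\bf r}^2+{\bf f}(R'-1)+{\bf s}{\bf r}^2)$ by the denominator, giving the asserted isomorphism.

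The hardest step will be controlling the extra ideal ${\bf frfr}$ (arising from the ${\bf f}\cdot{\bf rfr}$ contribution in (\ref{sergey})) and showing that its effective contribution already lies in ${\bf f}^2{\bf r}^2+{\bf f}(R'-1)\mathbb Z[F]$ modulo the denominator $[R\cap SF',\,R\cap SF',\,R]\gamma_4(R)$. Using the rearrangement identity $(f-1)(r-1)(f'-1)(r'-1)=(f-1)(f'-1)(r-1)(r'-1)+(f-1)(rf'-f'r)(r'-1)$ together with $rf'-f'r=(rf'r^{-1}f'^{-1}-1)\cdot f'r$ and $rf'r^{-1}f'^{-1}\in[R,F]\subseteq R\cap SF'$, the first summand lies in ${\bf f}^2{\bf r}^2$, while the second lies in ${\bf f}(R\cap SF'-1)\mathbb Z[F]\cdot{\bf r}$, whose group-theoretic realization in $\gamma_3(R)$ falls within $[R\cap SF',\,R\cap SF',\,R]\gamma_4(R)$, i.e., in the denominator, and hence may be discarded.
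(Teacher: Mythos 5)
Your proposal is correct and follows essentially the same route as the paper: specialize the sequence (\ref{spn}) to the free group $R$ with normal subgroup $R\cap (SF')$ and $n=2$ (absorbing $R''\cap\gamma_3(R)$ into $\gamma_4(R)$), then embed $R/(R\cap (SF'))\otimes{\sf SP}^2(R_{ab})$ into ${\bf fr}^2/({\bf sr}^2+{\bf f}^2{\bf r}^2+{\bf f}(R'-1))$ via (\ref{sergey}) and the identification $F\cap(1+{\bf s}+{\bf f}^2)=SF'$. The only real divergence is bookkeeping: the paper presents ${\sf SP}^2(R_{ab})$ directly as ${\bf r}^2/\bigl((R'-1)+{\bf fr}^2\bigr)$, so the extra ideal ${\bf frfr}$ that occupies your final paragraph never arises there.
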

\begin{proof}
The sequence (\ref{spn}) implies that the kernel of the natural
map
$$
\frac{\gamma_3(R)}{[R\cap (SF'),\,R\cap (SF'),\,R]\gamma_4(R)}\to
\frac{R}{R\cap (SF')}\otimes {\sf SP}^2(R_{ab})
$$
is $L_1{\sf SP}^3\left(\frac{R}{R\cap (SF')}\right)$. Thus  the
asserted statement follows from the natural embedding
$$
\frac{R}{R\cap (SF')}\otimes {\sf SP}^2(R_{ab})\hookrightarrow
\frac{\bf f}{{\bf s+f}^2}\otimes \frac{{\bf r}^2}{(R'-1)+{\bf
fr}^2}\cong \frac{{\bf fr}^2}{{\bf sr}^2+{\bf f}^2{\bf r}^2+{\bf
f}(R'-1)}.
$$
\end{proof}
\par\vspace{.25cm}

\begin{theorem} If $1\to R\to f\to G\to 1$ is a free presentation of a group $G$, then
\begin{equation}\label{ident1}
F\cap (1+{\bf f}^2{\bf r}^2+{\bf f}(R'-1))=[R\cap F',R\cap F',
R]\gamma_4(R),
\end{equation} and there is a natural isomorphism
\begin{equation}\label{ident2}
\frac{F\cap (1+{\bf rfr}^2+{\bf f}{\bf r}^3+{\bf
r}(R'-1))}{[[R,\,F],[R,\,F],R]\gamma_4(R)}\cong L_1{\sf
SP}^3(H_2(G)).
\end{equation}
\end{theorem}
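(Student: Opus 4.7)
The plan is to apply the exact sequence (\ref{spn}) to the presentation $1 \to S \to R \to R/S \to 1$ with $S = [R, F]$, making the substitution $(F, R, G) \mapsto (R, S, R/S)$ and taking $n = 2$. Since $R' \subseteq S$, the group $R/S$ is abelian; and since $R''\subseteq \gamma_4(R)$ for the free group $R$, the denominator of the middle term of (\ref{spn}) collapses to $[S,S,R]\gamma_4(R)$. This should produce an exact sequence
\[
0 \to L_1 {\sf SP}^3(R/S) \to \frac{\gamma_3(R)}{[S,S,R]\gamma_4(R)} \stackrel{\psi}{\to} R/S \otimes {\sf SP}^2(R_{ab}).
\]

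Next I would realise the target of $\psi$ as an ideal quotient via (\ref{sergey}). From the identification $F\cap(1+{\bf rf}+{\bf fr})=S$ we get $R/S \cong {\bf r}/({\bf rf}+{\bf fr})$, and combined with ${\sf SP}^2(R_{ab}) \cong {\bf r}^2/((R'-1)+{\bf fr}^2)$ (as used in the proof of the preceding theorem), formula (\ref{sergey}) yields
\[
R/S \otimes {\sf SP}^2(R_{ab}) \cong {\bf r}^3/({\bf rfr}^2+{\bf fr}^3+{\bf r}(R'-1)).
\]
This identifies the left-hand side of (\ref{ident2}) with $L_1 {\sf SP}^3(R/S)$. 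Finally, using the splitting $R/S \cong H_2(G) \oplus \mathcal{F}$ (Hopf's formula, split because $\mathcal{F}:=R/(R\cap F')$ is free abelian) together with a cross-effect computation for $L_1{\sf SP}^3$, I would reduce $L_1{\sf SP}^3(R/S)$ to $L_1{\sf SP}^3(H_2(G))$, following the template of Theorem \ref{main}.

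The main obstacle is this final cross-effect step. In the proofs of the parallel statements about $L_1{\sf SP}^2$ in Theorem \ref{main} and $L_2{\mathfrak L}_s^3$ in (\ref{sph23}) the potential cross-term vanishes (because ${\sf Tor}(H_2(G),\mathcal{F})=0$ and $L_2{\sf SP}^2=0$ respectively). By contrast, the cross-effect formula for $L_1{\sf SP}^3(A\oplus B)$ with $B$ free abelian produces an extra summand of the form $L_1{\sf SP}^2(A)\otimes B$ that is not automatically zero. I would need to check either that this extra contribution is already absorbed into $[[R,F],[R,F],R]\gamma_4(R)$ under the concrete realisation of $\psi$, or that the tensor product in (\ref{spn}) is implicitly over $\mathbb{Z}[F]$ rather than $\mathbb{Z}$, so that the troublesome cross-term disappears after quotienting by the diagonal $G$-action.
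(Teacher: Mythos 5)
Your route to (\ref{ident2}) is exactly the paper's: specialize (\ref{spn}) to the presentation $1\to[R,F]\to R\to R/[R,F]\to 1$ with $n=2$ (the term $R''\cap\gamma_3(R)$ being absorbed into $\gamma_4(R)$, as you note), which is precisely the paper's sequence (\ref{mono5}), and then rewrite $R/[R,F]\otimes{\sf SP}^2(R_{ab})\cong\frac{\bf r}{{\bf rf}+{\bf fr}}\otimes\frac{{\bf r}^2}{(R'-1)+{\bf fr}^2}\cong\frac{{\bf r}^3}{{\bf r}(R'-1)+{\bf rfr}^2+{\bf fr}^3}$ via (\ref{sergey}). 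However, your proposal says nothing about (\ref{ident1}), which is half of the theorem. The paper handles it by the same mechanism with $R\cap F'$ in place of $[R,F]$: since $R/(R\cap F')$ embeds in $F_{ab}$ it is free abelian, so $L_1{\sf SP}^3(R/(R\cap F'))=0$ and (\ref{spn}) yields a monomorphism $\frac{\gamma_3(R)}{[R\cap F',R\cap F',R]\gamma_4(R)}\hookrightarrow R/(R\cap F')\otimes{\sf SP}^2(R_{ab})\hookrightarrow\frac{{\bf fr}^2}{{\bf f}^2{\bf r}^2+{\bf f}(R'-1)}$, from which the equality (\ref{ident1}) is read off. You need to supply this part.

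The difficulty you flag at the end is genuine, and you should be aware that the paper does not resolve it either: its proof simply invokes ``the fact that $L_1{\sf SP}^3(R/[R,F])=L_1{\sf SP}^3(H_2(G))$'' with no justification. Your instinct about the cross-effect is correct: with $R/[R,F]\cong H_2(G)\oplus\mathcal F$ and $\mathcal F$ free abelian, the cross-effect decomposition of ${\sf SP}^3$ gives $L_1{\sf SP}^3(H_2(G)\oplus\mathcal F)\cong L_1{\sf SP}^3(H_2(G))\oplus\bigl(L_1{\sf SP}^2(H_2(G))\otimes\mathcal F\bigr)$, and the second summand need not vanish (take $H_2(G)\cong(\mathbb Z/2)^2$, so that $L_1{\sf SP}^2(H_2(G))\cong{\sf Tor}(\mathbb Z/2,\mathbb Z/2)\neq 0$). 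Neither of your proposed escapes works as stated: the tensor products in (\ref{spn}) are ordinary tensor products of abelian groups, and the extra summand sits inside the kernel of the map to $R/[R,F]\otimes{\sf SP}^2(R_{ab})$ --- i.e.\ inside the very group being identified --- so it cannot be absorbed into the denominator without altering the statement. This step is therefore a real gap, though one you share with the paper rather than one you introduced; closing it would require either an argument special to this situation showing the cross-term dies, or replacing the right-hand side of (\ref{ident2}) by $L_1{\sf SP}^3(R/[R,F])$.
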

\begin{proof}
Since $R/(R\cap F')$ is torsion-free, $L_1{\sf SP}^3(R/(R\cap
F'))=0$ and the sequence (\ref{spn}) implies that there is a
natural monomorphism \begin{equation}\label{mono4}
\frac{\gamma_3(R)}{[R\cap F', R\cap F',
R]\gamma_4(R)}\hookrightarrow R/(R\cap F')\otimes {\sf
SP}^2(R_{ab}).
\end{equation}
The identification (\ref{ident1}) follows from the natural
monomorphism
\begin{equation}
R/(R\cap F')\otimes {\sf SP}^2(R_{ab})\hookrightarrow \frac{\bf
f}{{\bf f}^2}\otimes \frac{{\bf r}^2}{(R'-1)+{\bf fr}^2}\cong
\frac{{\bf fr}^2}{{\bf f}^2{\bf r}^2+{\bf f}(R'-1)}.
\end{equation}
To prove (\ref{ident2}),  we first observe that there exists an
exact sequence
\begin{equation}\label{mono5}
0\to L_1{\sf SP}^3(R/[R,F])\to
\frac{\gamma_3(R)}{[[R,F],[R,F],R]\gamma_4(R)}\to R/[R,F]\otimes
{\sf SP}^2(R_{ab}).
\end{equation}
Therefore  the assertion  follows from the isomorphisms
$$
R/[R,F]\otimes {\sf SP}^2(R_{ab})\cong \frac{\bf r}{\bf
rf+fr}\otimes \frac{{\bf r}^2}{(R'-1)+{\bf fr}^2}\cong \frac{{\bf
r}^3}{{\bf r}(R'-1)+{\bf rfr}^2+{\bf fr}^3}
$$
and the fact that $L_1{\sf SP}^3(R/[R,\,F])=L_1{\sf
SP}^3(H_2(G))$.
\end{proof}
\section{Limits}
A theory of limits for functors on the category of free
presentation of groups is developed in \cite{IM:2014},
\cite{IM:2015}, \cite{MP:2015a}. For a group $G$, consider the
category $\mathcal E$ of free presentations
$$
1\to R\to F\to G\to 1.
$$
For any functor (also called a representation) $\mathcal F:
\mathcal E \mapsto \mathcal A$, its limit $\ilimit \mathcal F$
presents a well defined functor from the category of groups to
$\mathcal A$. To illustrate, let us recall one example from
\cite{MP:2015a}. As mentioned in the introduction, for a
polynomial functor of degree $n$, the intermediate derived
functors (from the first till the $(n-2)$nd) as a rule have a
complicated nature. In \cite{MP:2015a}, the authors obtained a
limit formula for such a functor, namely, $L_1{\sf SP}^3$:
\begin{equation}\label{sp3ident}
L_1{\sf SP}^3(G_{ab})= \ilimit
\frac{\gamma_2(F)}{[R',F]\gamma_3(F)}.
\end{equation}
In the present work, we make further contribution to the theory of
limits, with the help of the results obtained above.

There are two basic simple properties of limits which we will
use  (see \cite{IM:2014}):\\
1) The inverse limit is left exact; for an exact sequence of
representations $$\mathcal F\hookrightarrow \mathcal G\to \mathcal
H$$ there is a natural exact sequence of limits
$$
\ilimit \mathcal F\hookrightarrow \ilimit \mathcal G\to \ilimit
\mathcal H.
$$
2) For any representations $\mathcal F, \mathcal G$,
$$
\ilimit \mathcal F\otimes \mathcal G(F_{ab})=0;
$$
i.e, if a representation is the tensor product of some
representation with a functor which depends only on $F_{ab}$, its
limit is zero.
\begin{theorem}\label{main1}
\begin{align}
& \ilimit \frac{R'}{\gamma_2([R,F])\gamma_3(R)}\cong
L_1{\sf SP}^2(H_2(G)),\label{o1}\\
& \ilimit \frac{\gamma_3(R)}{[\gamma_2(R\cap
F'),R]\gamma_4(R)}=0,\label{o2}\\
& \ilimit \frac{\gamma_3(R)}{[\gamma_2([R,F]),
R]\gamma_4(R)}\cong L_1{\sf SP}^3(H_2(G)),\label{o3}\\
& \ilimit \frac{\gamma_3(R)}{\gamma_3([R,F])\gamma_4(R)}\cong
L_2{\mathfrak L}_s^3(H_2(G)),\label{o4}\\
& \ilimit \frac{\gamma_4(F)}{[R,\,R,\,F,\,F]F''\gamma_5(F)}\cong
L_1{\sf SP}^4(G_{ab}),\label{o5}\\
& \ilimit
\frac{\gamma_4(R)}{[[R,\,F],\,[R,\,F],\,R,\,R]R''\gamma_5(R)}\cong
L_1{\sf
SP}^4(H_2(G)),\label{o6}\\
& \ilimit \frac{R''}{\gamma_2([R,\,R,\,F])\gamma_3(R')}\cong
L_1{\sf SP}^2(H_2(G,{\sf SP}^2(\bf g)))\label{o7},
\end{align}
in particular, if $G$ is 2-torsion-free, then
$$
\ilimit \frac{R''}{\gamma_2([R,\,R,\,F])\gamma_3(R')}\cong L_1{\sf
SP}^2(H_4(G,\,\mathbb Z/2)).\label{o8}
$$
\end{theorem}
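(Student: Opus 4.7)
The plan is uniform: for each quotient in (\ref{o1})--(\ref{o7}), apply one of the exact sequences (\ref{firstdialine}), (\ref{l3seq}), or (\ref{spn}) of Section~3 to present the quotient as the kernel term of a short exact sequence whose leading derived functor depends canonically on a group built from the presentation, and whose cokernel factor is a tensor product involving a piece depending only on $F_{ab}$. Then apply $\ilimit$: left exactness preserves the derived functor, and the vanishing $\ilimit\mathcal F\otimes\mathcal G(F_{ab})=0$ from Section~5 kills the cokernel. The Hopf-type splitting $R/[R,F]\cong H_2(G)\oplus A$ with $A$ free abelian, combined with cross-effect formulas for the polynomial functor in question, converts the resulting derived functor into the asserted derived functor of $H_2(G)$ or $G_{ab}$.

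Concretely, for (\ref{o1}) take $Q=R_{ab}$ and $U=[R,F]/R'$ in (\ref{firstdialine}); the Magnus identification $\Lambda^2(Q)/\Lambda^2(U)\cong R'/(\gamma_2([R,F])\gamma_3(R))$ together with the cross-effect formula for $L_1{\sf SP}^2$ delivers the result after $\ilimit$. For (\ref{o2}) apply (\ref{spn}) with $n=2$: since $R/(R\cap F')$ is torsion-free, $L_1{\sf SP}^3$ of it vanishes and the remaining tensor term dies in the limit. For (\ref{o3}) use (\ref{spn}) with $n=2$ applied to the pair $[R,F]\subset R$, as in identification (\ref{ident2}) of Section~4. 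For (\ref{o4}) apply (\ref{l3seq}) with $Q=R_{ab}$, $U=[R,F]/R'$ together with the cross-effect formula for $L_2\mathfrak{L}_s^3$. For (\ref{o5}) and (\ref{o6}) use (\ref{spn}) with $n=3$ applied to the presentations $(F,R)$ and $(R,[R,F])$ respectively, together with the cross-effect formula for $L_1{\sf SP}^4$.

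The substantive case is (\ref{o7}). I apply (\ref{firstdialine}) with $Q=R'/R''$ and $U=[R',F]/R''$; note that $[R',F]=[R,R,F]$ and, by Hopf's formula applied to $F/R'$, that $Q/U=R'/[R',F]=H_2(F/R')$. The four-term exact sequence reads
\begin{equation*}
0\to L_1{\sf SP}^2(H_2(F/R'))\to\frac{R''}{\gamma_2([R,R,F])\gamma_3(R')}\to H_2(F/R')\otimes R'_{ab}\to{\sf SP}^2(H_2(F/R'))\to 0.
\end{equation*}
The right-hand two terms die under $\ilimit$ by Property~2 (the tensor factor $R'_{ab}$ is not $F_{ab}$-rigid). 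For the leading term, I apply the Lyndon--Hochschild--Serre spectral sequence to $1\to R_{ab}\to F/R'\to G\to 1$: the quadratic contribution to $H_2(F/R')$ is detected by $H_0(G,{\sf SP}^2(R_{ab}))$, while the lower-filtration pieces involve $H_p(G,R_{ab})$. Inserting the relation sequence $0\to R_{ab}\to\mathbb{Z}[G]^{\mathrm{rk}(F)}\to{\bf g}\to 0$ and observing that the projective middle contributes $F_{ab}$-dependent terms killed by $\ilimit$, one obtains $\ilimit H_2(F/R')\cong H_2(G,{\sf SP}^2({\bf g}))$ up to summands invisible to $L_1{\sf SP}^2$; the cross-effect formula then yields (\ref{o7}).

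For the \emph{in particular} clause, combine the shift isomorphism $H_*(G,{\bf g})\cong H_{*+1}(G,\mathbb{Z})$ (from $0\to{\bf g}\to\mathbb{Z}[G]\to\mathbb{Z}\to 0$) with its iterate $H_*(G,{\bf g}\otimes{\bf g})\cong H_{*+2}(G,\mathbb{Z})$ (obtained by tensoring the augmentation sequence with ${\bf g}$ and using that ${\bf g}\otimes\mathbb{Z}[G]$ is induced) and the defining sequence $0\to\Lambda^2{\bf g}\to{\bf g}\otimes{\bf g}\to{\sf SP}^2({\bf g})\to 0$; this reduces $H_2(G,{\sf SP}^2({\bf g}))$ to an extension built from $H_3(G,\mathbb{Z})$ and $H_4(G,\mathbb{Z})$. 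Reducing modulo~$2$, and using the 2-torsion-free hypothesis to kill the $\mathrm{Tor}$-contributions in the universal coefficient sequence, yields a natural isomorphism $L_1{\sf SP}^2(H_2(G,{\sf SP}^2({\bf g})))\cong L_1{\sf SP}^2(H_4(G,\mathbb{Z}/2))$. The main obstacle is the spectral-sequence bookkeeping in (\ref{o7}): one must verify that every $F_{ab}$-dependent term on the $E_2$-page vanishes under $\ilimit$, so that only the canonical $H_2(G,{\sf SP}^2({\bf g}))$ contribution survives to be hit by $L_1{\sf SP}^2$.
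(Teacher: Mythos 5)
Your treatment of (\ref{o1})--(\ref{o6}) is essentially the paper's proof: the same sequences (\ref{firstdialine}), (\ref{l3seq}), (\ref{spn}) with the same substitutions, left exactness of $\ilimit$, Property 2, and the splitting $R/[R,F]\cong H_2(G)\oplus(\text{free})$ with the cross-effect formula. One point to repair even there: the cokernel terms you must kill are of the form $R/[R,F]\otimes R_{ab}$, $R/[R,F]\otimes {\sf SP}^3(R_{ab})$, etc., in which \emph{neither} factor depends only on $F_{ab}$, so Property 2 does not apply verbatim; the paper first embeds $R_{ab}$ via the Magnus embedding $R_{ab}\hookrightarrow \mathbb Z[G]\otimes F_{ab}$ (and its analogues) and only then invokes Property 2 together with left exactness. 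This is routine but should be said; your blanket phrase ``a piece depending only on $F_{ab}$'' is not literally true for these terms, and in (\ref{o7}) the parenthetical ``$R'_{ab}$ is not $F_{ab}$-rigid'' is not an argument at all --- there you need $R'_{ab}\hookrightarrow \mathbb Z[F/R']\otimes F_{ab}$.

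The genuine gap is in (\ref{o7})--(\ref{o8}). The paper gets, exactly as you do, $\ilimit R''/(\gamma_2([R,R,F])\gamma_3(R'))\cong L_1{\sf SP}^2(R'/[R,R,F])$, but then identifies $R'/[R',F]$ (equivalently $H_2(F/R')$, by Hopf as you note) by \emph{quoting} St\"ohr's theorem \cite{Stohr:1984} (cf.\ Kuzmin) on the torsion of this module; that external result is what makes the answer a functor of $G$ alone. Your substitute --- an LHS spectral sequence sketch for $1\to R_{ab}\to F/R'\to G\to 1$ --- does not work as written. First, the quadratic contribution is $E^2_{0,2}=H_0(G,\Lambda^2(R_{ab}))$, not $H_0(G,{\sf SP}^2(R_{ab}))$, since $H_2$ of a free abelian group is its exterior square; the symmetric square ${\sf SP}^2({\bf g})$ in the statement is an output of St\"ohr's analysis, not something visible on the $E_2$-page. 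Second, you cannot push $\ilimit$ through the spectral sequence or through $L_1{\sf SP}^2$: $\ilimit$ is only left exact (higher limits obstruct), and $\ilimit L_1{\sf SP}^2(H_2(F/R'))$ is not $L_1{\sf SP}^2(\ilimit H_2(F/R'))$. What is actually needed is that the torsion subgroup of $R'/[R',F]$ is independent of the presentation and naturally isomorphic, as far as $L_1{\sf SP}^2$ can see, to $H_2(G,{\sf SP}^2({\bf g}))$ --- precisely the content of St\"ohr's theorem, which your phrase ``up to summands invisible to $L_1{\sf SP}^2$'' assumes rather than proves. The same applies to the 2-torsion-free case (\ref{o8}): your dimension-shifting with the augmentation sequence only places $H_2(G,{\sf SP}^2({\bf g}))$ in an extension involving $H_3(G,\mathbb Z)$, $H_4(G,\mathbb Z)$ and universal-coefficient terms, and extracting the natural isomorphism with $L_1{\sf SP}^2(H_4(G,\mathbb Z/2))$ again requires the torsion description you have not established. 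So for (\ref{o7})--(\ref{o8}) either cite the St\"ohr--Kuzmin results, as the paper does, or supply a complete proof of the torsion identification; the spectral-sequence paragraph as it stands is not a proof.
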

\begin{proof}
It is shown in the proof of Theorem \ref{main} that there is the
following exact sequence
$$
0\to L_1{\sf SP}^2(H_2(G))\to
\frac{R'}{\gamma_2([R,F])\gamma_3(R)}\to R/[R,F]\otimes R_{ab}.
$$
The Magnus embedding $R_{ab}\hookrightarrow \mathbb Z[G]\otimes
F_{ab}\twoheadrightarrow {\bf g}$ implies that
$$
\ilimit R/[R,F]\otimes R_{ab} \hookrightarrow \ilimit
R/[R,F]\otimes \mathbb Z[G]\otimes F_{ab}=0.
$$
The isomorphism (\ref{o1}) thus follows.

The monomorphism (\ref{mono4}) implies that there is a
monomorphism $$\frac{\gamma_3(R)}{[R\cap F', R\cap F',
R]\gamma_4(R)}\hookrightarrow F_{ab}\otimes {\sf SP}^2(R_{ab}),$$
and so the statement (\ref{o2}) follows.

The sequence (\ref{mono5}) implies that there is the following
exact sequence $$0\to L_1{\sf SP}^3(H_2(G))\to
\frac{\gamma_3(R)}{[[R,F],[R,F],R]\gamma_4(R)}\to R/[R,F]\otimes
{\sf SP}^2(F_{ab}),$$ and the isomorphism (\ref{o3}) follows.

The same arguments show that the sequence (\ref{mono7}) implies
(\ref{o4}). Further, the sequence (\ref{spn}) implies (\ref{o5}).
The sequence (\ref{spn}) implies that there is the following exact
sequence
$$
0\to L_1{\sf SP}^4(R/[R,F])\to
\frac{\gamma_4(R)}{[[R,F],[R,F],R,R]R''\gamma_5(R)}\to
R/[R,F]\otimes {\sf SP}^3(R_{ab}).
$$
Since
$$
\ilimit R/[R,F]\otimes {\sf SP}^3(R_{ab})\hookrightarrow \ilimit
R/[R,F]\otimes {\sf SP}^3(F_{ab})=0,
$$
the isomorphism (\ref{o6}) follows. The isomorphism (\ref{o7})
follows in the same way. To see it, we get first the isomorphism
$$
L_1{\sf SP}^2(R'/[R,R,F])\cong\ilimit
\frac{R''}{\gamma_2([R,\,R,\,F])\gamma_3(R')}.
$$
Now the identifications (\ref{o7}) and (\ref{o8}) follow from the
results of R. St\"ohr \cite{Stohr:1984}, which describe the
torsion of $R''/[R'',F]$.
\end{proof}

We end the paper with a problem. A detailed analysis analogous to
that done in \cite{MP:2015a} shows that, for any $n\geq 2,$ the
limit
$$
\ilimit \frac{\gamma_n(F)}{\gamma_n(R)\gamma_{n+1}(F)}
$$
can be identified with the $(n-1)$st derived functor of the $n$th
super-Lie power of $G_{ab}$. On the other hand,
$$
\ilimit
\frac{\gamma_n(F)}{[R,\underbrace{F,\,F,\,\ldots,\,F}_{n-1}\,]\gamma_{n+1}(F)}\cong\EuScript
L^n(G_{ab}).
$$
The identification (\ref{sp3ident}) suggests a conjecture that the
limit
\begin{equation}\label{unknown}
\ilimit
\frac{\gamma_n(F)}{[\underbrace{R,\ldots,\,R}_k,\underbrace{F,\,\ldots,\,F}_{n-k}\,]\gamma_{n+1}(F)}
\end{equation}
may be related to the $(k-1)$st derived functor of some
well-described polynomial functor of degree $n$ applied to
$G_{ab}$.

\vspace{.5cm}\noindent{\bf Problem.} Describe the functors
(\ref{unknown}) for all $n>3$ and $k=2,\dots, n-1$.

\end{document}